\theoremstyle{plain}
\newtheorem{theorem}{Theorem}[section]
\newtheorem{lemma}[theorem]{Lemma}
\theoremstyle{definition}
\newtheorem{remark}[theorem]{Remark}
\newcommand{\LL}{\mathsf{LL}}
\title[Dimension jump at $p_u$ for percolation in $\infty+d$ dimensions]{Dimension jump at the uniqueness threshold for percolation in $\infty+d$ dimensions}
\author{Tom Hutchcroft and Minghao Pan}
\begin{document}

\maketitle

\begin{abstract}
Consider percolation on $T\times \mathbb{Z}^d$, the product of a regular tree of degree $k\geq 3$ with the hypercubic lattice $\mathbb{Z}^d$. It is known that this graph has $0<p_c<p_u<1$, so that there are non-trivial regimes in which percolation has $0$, $\infty$, and $1$ infinite clusters a.s., and it was proven by Schonmann (1999) that there are infinitely many infinite clusters a.s.\ at the uniqueness threshold $p=p_u$. We strengthen this result by showing that 
the Hausdorff dimension of the set of accumulation points of each infinite cluster in the boundary of the tree 
   has a jump discontinuity from at most $1/2$ to $1$ at the uniqueness threshold~$p_u$. We also prove that various other critical thresholds including the $L^2$ boundedness threshold $p_{2\to 2}$ coincide with $p_u$ for such products, which are the first nonamenable examples proven to have this property. All our results apply more generally to products of trees with arbitrary infinite amenable Cayley graphs and to the lamplighter on the tree.
\end{abstract}

\section{Introduction}
Let $G=(V,E)$ be a connected, locally finite, infinite graph. We assume that $G$ is transitive, meaning that any vertex of the graph can be mapped to any other vertex by a graph automorphism, and will write $o$ for a fixed root vertex of $G$. We denote \textbf{Bernoulli bond percolation} with parameter $p$ on $G$ by $G_p$, which is obtained from $G$ by independently setting each edge to be open (retained) with probability $p$ and closed (removed) with probability $1-p$. We use $\mathbb{P}_p$ and $\mathbb{E}_p$ to denote probabilities and expectations taken with respect to the random graph $G_p$. Connected components of $G_p$ are called \textbf{clusters} and, for each $x\in V(G)$, the cluster containing $x$ is denoted by $K_x$. We define the critical parameter to be
\begin{equation*}
p_{c}=p_{c}(G):=\inf \left\{ p\in \lbrack 0,1]:
\text{there exists an infinite cluster $\mathbb{P}_{p}$ a.s.}\right\} 
\end{equation*}
and the \textbf{uniqueness threshold} to be%
\begin{equation*}
p_{u}=p_{u}(G)=\inf \left\{ p\in \lbrack 0,1]:
\text{there exists a unique infinite cluster 
$\mathbb{P}_{p}$ a.s.}\right\};
\end{equation*}
for transitive graphs these are the only values of $p$ at which the number of infinite clusters changes by the results of \cite{HPS99,MR1676831}.
The classic Burton-Keane argument \cite{burton1989density} establishes that $p_c=p_u$ for amenable transitive graphs; the converse statement that $p_c<p_u$ for every nonamenable transitive graph is a well-known conjecture of Benjamini and Schramm \cite{bperc96} which has been proven in a number of cases \cite{bperc96,1804.10191,Hutchcroftnonunimodularperc,MR3009109,MR3005730,MR1756965,MR1833805}
but remains open in general.

An important heuristic in the study of \emph{critical} ($p=p_c$) percolation is that in sufficiently ``high-dimensional'' contexts, critical percolation clusters should, on large scales, look like critical \emph{branching random walk}, which can be thought of as a ``non-interacting'' variant of percolation. 
In particular, in such high-dimensional settings critical percolation should be governed by the same critical exponents as percolation on a \emph{tree}, so that e.g.\ the probability that the cluster of the origin has size at least $n$ scales like $n^{-1/2}$.
This is known as \textbf{mean-field} critical behaviour: it is expected to hold on transitive graphs if and only if the volume growth is strictly more than six-dimensional, and is known to hold on high-dimensional Euclidean lattices \cite{MR762034,MR1127713,duminil2024alternative,fitzner2015nearest,MR1043524} and various classes of nonamenable transitive graphs \cite{1804.10191,Hutchcroftnonunimodularperc,MR1888869}.

This paper is motivated by the problem of understanding when this (imprecisely defined) comparison between percolation and branching random walk can be extended
all the way to the uniqueness threshold $p_u$. 
The appropriate analogue of $p_u$ for branching random walk is the \textbf{recurrence threshold}, where the mean offspring $\bar \mu$ is equal to $1/\rho$ with $\rho$ the spectral radius of the graph: This is indeed a good analogue of $p_u$ in the sense that if $\bar \mu <1/\rho$ then the branching random walk visits a zero density set of vertices a.s.\ while if $\bar \mu>1/\rho$ then the branching random walk visits every vertex infinitely often a.s.\ \cite{gantert2005critical}. In fact, branching random walk at the recurrence threshold $\bar \mu=1/\rho$ itself also always belongs to the transient regime in the sense that it visits a zero-density set of vertices a.s.\ \cite[Theorem 3.2]{gantert2005critical} (see also \cite[Theorem 7.8]{Woess}); see also \cite{hutchcroft2020non} for stronger results in this direction. This can be thought of as the branching random walk analogue of having non-unique infinite clusters at $p_u$ for percolation.

In light of this, an immediate problem that arises in our investigation is that the tree is no longer a good model for what should happen in a ``generic'' or ``high-dimensional'' example at the uniqueness threshold $p_u$, since it has $p_u=1$ and therefore has a \emph{unique} infinite cluster at $p_u$. Thus, the tree's critical behaviour at $p_u$ does not even \emph{qualitatively} coincide with the critical behaviour of branching random walk at the recurrence threshold. Another important class of examples that are non-generic in this sense are the  tesselations of the hyperbolic plane, which have $0<p_c<p_u<1$ but have a unique infinite cluster at $p_u$ in contrast to branching random walk~\cite{BS00,MR3009109}. This is conjecturally related to the fact that, although these examples are nonamenable and therefore ``infinite-dimensional'' in various senses (e.g.\ they have infinite volume-growth, spectral, and isoperimetric dimensions)
 they are ``low-dimensional'' in various other senses that are perhaps more relevant to percolation at the uniqueness threshold (e.g.\ they have internal isoperimetric dimension \cite{hutchcroft2021non} equal to $1$ and positive first $\ell^2$-Betti number \cite[Chapter 7.8]{LP:book}). These properties distinguish these examples from various other classes of nonamenable transitive graphs that are known to have \emph{non-uniqueness} at $p_u$, including nonamenable products of infinite graphs \cite{gaboriau2016approximations,MR1770624}, Cayley graphs of Kazhdan groups \cite{LS99},  and Cayley graphs of nonamenable groups with amenable normal subgroups of exponential growth such as nonamenable lamplighter groups $\mathbb{Z}_2 \wr\Gamma$ \cite{2409.12283}. 


The primary goal of this paper is to show that the analogy between percolation and branching random walk can be extended all the way to $p_u$ in various strong, quantitative senses for the product $T\times \mathbb{Z}^d$ of a $k$-regular tree and a lattice with $k\geq 3$. This example has a long history in percolation theory: It was first studied by Grimmett and Newman in 1990 \cite{MR1064560}, who showed that $0<p_c<p_u<1$ when $k$ is sufficiently large, making it the first example proven to have this property. It was also the first example proven to have infinitely many infinite clusters at $p_u$, as shown by Schonmann in 1999 \cite{schonmann1999percolation}. Finally, it was a central motivating example in the development of the first author's theory of percolation on \emph{nonunimodular} transitive graphs \cite{Hutchcroftnonunimodularperc}, which in particular yields an extension of the results of \cite{MR1064560} to all $k\geq 3$. Our results also apply to the lamplighter graph $\LL(T)$ over the tree, where they are easier to prove.

\medskip

\textbf{The dimension jump.} One of the most interesting features of the branching random walk recurrence transition is its highly discontinuous character, exemplified by the \emph{dimension jump} on the boundary: 
It was proven by Liggett \cite{Liggett96} and Heuter and Lalley \cite{HL00} that for branching random walk on the free group, the Hausdorff dimension of the set of limit points of the process on the boundary of the tree jumps discontinuously from at most $1/2$ at and below the recurrence threshold to $1$ above the recurrence threshold. Similar results have now been proven for branching random walks in a wide range of geometric settings \cite{CGM12,dussaule2022branching,karpelevich1998phase,lalley1997hyperbolic,SWX23} as well as for the contact process \cite{liggett1996branching}. Similar results were also established by Liggett \cite{liggett1996branching} for the \emph{contact process} on a tree, which can (via its graphical representation) be thought of as a percolation-type model on $T\times \mathbb{R}$ that is oriented in the $\mathbb{R}$ coordinate. See also Lalley's ICM proceedings \cite{lalley2006weak} for a survey of the subject as of 2006.

Again, these discontinuous behaviours of branching random walk at the recurrence threshold are in stark contrast to the behaviour of percolation on trees and the hyperbolic plane. Indeed, for percolation on the tree it follows from Hawkes' theorem \cite{Hawkes81} that the limit sets of infinite clusters have Hausdorff dimension $\log p(k-1)/\log (k-1)$ when $p_c=1/(k-1)<p\leq 1$, which increases continuously from $0$ at $p_c$ to $1$ at $p_u=1$. (Here and elsewhere we use the metric $d_{\partial T}(\xi,\zeta)=(k-1)^{-|\xi\wedge \zeta|}$ on $\partial T$, where $\xi \wedge \zeta$ denotes the vertex of $T$ where the rays to the boundary points $\xi$ and $\zeta$ last meet and $|\xi\wedge \zeta|$ denotes the distance from this point to the origin, which makes the full boundary $\partial T$ have Hausdorff dimension $1$.) Lalley~\cite{Lalley01} proved that a similar qualitative picture applies for percolation on certain tessellations of the hyperbolic plane, with the Hausdorff dimension of the limit set of an infinite cluster growing continuously from $0$ to $1$ as $p$ increases from $p_c$ to $p_u$.

 Our first main result shows that percolation on $T\times \mathbb{Z}^d$ and the lamplighter graph $\LL(T)$ has a dimension jump at $p_u$ analogous to that for branching random walk. These are the first examples proven to exhibit such a dimension jump for unoriented percolation.

\begin{theorem}[Dimension jump]
\label{thm:dimension_jump}
Let $T$ be the $k$-regular tree for some $k\geq 3$ and consider percolation on either the direct product $T\times H$ of $T$ with some infinite amenable Cayley graph $H$ or the lamplighter graph $\LL(T)$. Let $\pi$ denote the projection onto the tree, and let $\Lambda$ be the set of limit points in $\partial T$ of the projected cluster $\pi(K_o)$. On the event that $K_o$ is infinite, the Hausdorff dimension $\delta_H(\Lambda)$ coincides $\mathbb{P}_p$-almost surely with a constant $\delta_H(p)$ which increases continuously from $0$ to $\delta_H(p_u)\leq \frac{1}{2}$ over the interval $(p_c,p_u]$ and is equal to $1$ for all $p>p_u$.
\end{theorem}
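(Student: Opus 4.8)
The plan is to study the projected cluster $\mathcal{T}:=\pi(K_o)$, which is a connected subtree of $T$ (being the image of the connected set $K_o$ under the graph homomorphism $\pi$), through the counting functions $R_n:=|\{x\in S_n:x\in\mathcal T\}|=|\{x\in S_n:K_o\cap(\{x\}\times H)\ne\emptyset\}|$, where $S_n$ is the tree-sphere of radius $n$ about $o$. Note that $\Lambda=\partial\mathcal T$ and that $\xi\in\Lambda$ precisely when $K_o$ reaches the column over every vertex of the ray from $o$ to $\xi$. I would begin with two soft reductions. First, on $\{|K_o|=\infty\}$ the subtree $\mathcal T$ is almost surely infinite: it is infinite with positive probability since the tree-direction two-point function is positive, and ``$\pi$ of the cluster is infinite'' is an $\operatorname{Aut}(G)$-invariant cluster property, so indistinguishability of infinite clusters promotes this to an almost-sure statement. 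Second, $\delta_H(\Lambda)$ is almost surely a deterministic constant $\delta_H(p)$ on $\{|K_o|=\infty\}$: the limit set is intrinsic to the cluster, and the visual metrics on $\partial T$ based at different vertices are bi-Lipschitz equivalent and hence share Hausdorff dimension, so $(\text{cluster})\mapsto\delta_H(\text{its limit set})$ is $\operatorname{Aut}(G)$-invariant and indistinguishability forces it to agree across all infinite clusters --- using Schonmann's theorem that there are infinitely many of them when $p=p_u$, while the case $p>p_u$ is trivial.

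\textbf{Reduction to a growth rate.} The reached vertices of $S_n$ index $R_n$ cylinders of diameter $(k-1)^{-n}$ covering $\Lambda$, so Markov's inequality and Borel--Cantelli give $\delta_H(p)\le\gamma(p):=\limsup_n\frac1n\log_{k-1}\mathbb E_p[R_n]$ almost surely. Transitivity of $\operatorname{Aut}(T\times H)$ on columns at a fixed tree-distance gives $\mathbb E_p[R_n]=|S_n|\,g_n(p)$ with $g_n(p):=\mathbb P_p(o\leftrightarrow\{x\}\times H)$ for any $x$ with $d_T(o,x)=n$, and one checks (FKG plus a restart argument) that $\mathbb E_p[R_n]$ is, up to constants, multiplicative, so the $\limsup$ is a limit. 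The matching lower bound $\delta_H(p)\ge\gamma(p)$ comes from the standard potential-theoretic second-moment method for boundaries of random trees: one builds a Frostman measure on $\Lambda$ of exponent $\gamma(p)$ out of normalized counting measures on the reached depth-$n$ vertices, the key input being $\mathbb E_p[R_n^2]\le C\,\mathbb E_p[R_n]^2$, which reduces to a correlation estimate saying that, given $K_o$ reaches a column $z$, the events that it reaches two descendant columns of $z$ are nearly conditionally independent; it is here that the amenability of $H$ enters, keeping the reached part of column $z$ thin enough that its influence on the two subtrees costs only a bounded factor. Thus $\delta_H(p)=\gamma(p)$ for $p\in(p_c,p_u]$; this is non-decreasing in $p$ (monotone coupling) and lower semicontinuous (a supremum of $p$-continuous functions), hence left-continuous, and continuity on $(p_c,p_u)$ together with $\gamma(p)\downarrow0$ as $p\downarrow p_c$ follows from continuity of connectivities and the subexponential growth of $\mathbb E_p[R_n]$ near $p_c$ (the projected cluster has branching number tending to $1$). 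This establishes everything claimed except the value of $\gamma$ at and above $p_u$.

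\textbf{The dimension jump (main obstacle).} I would obtain $\gamma(p_u)\le\tfrac12$ from the implication
\[
\gamma(p)>\tfrac12\ \Longrightarrow\ \text{Bernoulli-}p\text{ percolation on }T\times H\text{ has a unique infinite cluster.}
\]
Granting it, Schonmann's non-uniqueness at $p_u$ forces $\gamma(p_u)\le\tfrac12$, and monotonicity gives $\delta_H(p)=\gamma(p)\le\tfrac12$ for every $p\le p_u$; this bound is also exactly the content, specialised to these graphs, of the identity $p_u=p_{2\to2}$ proved elsewhere in the paper, and may be imported from there instead. To prove the implication I would fix far-apart vertices $u=(a,o_H)$, $v=(b,o_H)$, set $2m=d_T(a,b)$, root $T$ at the tree-midpoint $w$ of $a$ and $b$, and consider the $\asymp(k-1)^{n-m}$ columns $x$ at tree-depth $\approx n$ in a child-subtree of $w$ containing neither $a$ nor $b$, so $d_T(a,x)\approx d_T(b,x)\approx n$. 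Each such column is reached by $K_u$, and by $K_v$, with probability $\asymp g_n$, and $\gamma(p)>\tfrac12$ makes $(k-1)^{n-m}g_n^2\to\infty$; a second-moment argument --- FKG for the lower correlation bound, and the column-thinness estimate above (again via amenability of $H$) for the upper bound on the second moment --- then shows that $K_u$ and $K_v$ both reach a common column with probability bounded below uniformly in $u,v$, after which a local surgery inside that column merges the two clusters with positive conditional probability; feeding this into the usual ergodicity argument rules out more than one infinite cluster. Making the second-moment estimate and the merging step hold \emph{uniformly} in $u,v$ is the delicate point and is where the precise geometry of $T\times H$ is used; for the lamplighter graph $\LL(T)$ the same scheme goes through and is technically lighter, the fibre being especially tractable.

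\textbf{Above the uniqueness threshold.} For $p>p_u$ I would show $\pi(K_o)=T$ almost surely on $\{|K_o|=\infty\}$, which gives $\Lambda=\partial T$ and hence $\delta_H(p)=1$. With $K_\infty$ the unique infinite cluster, $W:=\{x\in T:(\{x\}\times H)\cap K_\infty\ne\emptyset\}$ is a nonempty connected subtree of $T$; if $W\ne T$ with positive probability then, by transitivity, $\mathbb P_p(x\in W,\ x'\notin W)>0$ for some edge $x\sim x'$, and on that event $K_\infty$ is disjoint from $T_{x'}\times H$, where $T_{x'}$ is the component of $x'$ in $T\setminus\{x\}$. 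Since there is no other infinite cluster, the configuration restricted to $T_{x'}\times H$ --- a graph with the same critical parameter as $T\times H$, hence supercritical at $p>p_u>p_c$ --- would then have no infinite cluster, a contradiction. Thus $W$ has no boundary in the connected graph $T$, so $W=T$ and $\delta_H(p)=1$, completing the dimension jump.
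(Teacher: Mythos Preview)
Your high-level outline mirrors the paper's: reduce the dimension to an exponential growth rate of fiber-hitting probabilities, bound that rate by $1/2$ below $p_u$ via a collision/backscattering mechanism, and get dimension $1$ above $p_u$. But two of your steps contain genuine gaps that the paper resolves with machinery you do not invoke.

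\textbf{Second-moment bounds and the role of amenability.} You rely twice on a Paley--Zygmund input of the form $\mathbb{E}_p[R_n^2]\le C\,\mathbb{E}_p[R_n]^2$ (for the Frostman lower bound) and an analogous variance bound for the number of columns jointly reached by $K_u$ and $K_v$ (for the jump). You justify these by saying amenability of $H$ keeps the reached portion of a column ``thin''. That is not how amenability enters, and the bound as stated is not available without substantial work. The paper avoids any second-moment estimate on the full-space $R_n$ by working instead with \emph{slab-restricted} events $\{o\xleftrightarrow{L_{-n,0}}[x]\}$, which are genuinely independent across disjoint subtrees below a given level; this lets one embed honest Galton--Watson processes (giving the dimension lower bound via Hawkes--Lyons) and, for the jump on $T\times H$, a supercritical branching process $Y^{h,g}$ tracking columns reached from both $(o_T,h)$ and $(o_T,g)$. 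Amenability of $H$ is used not to bound column intersections but, via Kesten's theorem, to show that the ``first-hit'' slab probability $Q_p(n)$ has the same exponential rate as $P_p(n)$; the thinness of $K_o\cap[x]$ for $p<p_u$ (exponential tail, uniform in $x$) comes from the companion paper's relative sharpness theorem, not from amenability of $H$ directly. Your full-space $g_n$ and $R_n$ lack the independence structure needed to run either argument without these ingredients.

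\textbf{Right-continuity of $\gamma$ on $(p_c,p_u)$.} Your supermultiplicativity gives $\gamma(p)$ as a supremum of continuous functions, hence lower semicontinuous; combined with monotonicity this yields only \emph{left}-continuity. Your sentence ``continuity on $(p_c,p_u)$ \ldots follows from continuity of connectivities'' does not supply right-continuity: a limit of continuous functions need not be continuous. The paper obtains right-continuity by a completely different route, proving $p_t=p_u$ via a Hammersley--Welsh generating-function inequality and then importing from the nonunimodular theory that for $p<p_t$ the rate $\beta_p^*$ coincides with a quantity $\beta_p$ defined through a \emph{sub}multiplicative sequence (hence right-continuous). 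You neither invoke $p_t=p_u$ for this purpose nor provide any substitute, so continuity is not established.

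Your treatment of $p>p_u$ is fine, and the reduction of the $\le 1/2$ bound to ``$\gamma(p)>1/2\Rightarrow$ uniqueness'' is the right idea; but note that you cannot simply ``import'' this bound from $p_u=p_{2\to2}$, since in the paper the proof of $p_u=p_{2\to2}$ itself rests on first establishing $\beta_p^*\ge 1/2$ via backscattering.
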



In the case of $T\times \mathbb{Z}$, \Cref{thm:dimension_jump} can be thought of as a significant strengthening of a result of Schonmann \cite{schonmann1999percolation}, who proved that this graph has non-uniqueness at $p_u$ by establishing the two-point function estimate
\[
\mathbb{P}_{p_u}((x,0)\leftrightarrow (y,0))\leq (k-1)^{-\frac{1}{2}d(x,y)}
\]
for every $x,y \in V(T)$; the factor $1/2$ appearing in the exponent here is closely related to the inequality $\delta_H(p_u)\leq 1/2$. Probabilistically, this factor arises from the birthday problem: it is the value of the constant in the exponent at which the expected intersection of \emph{two independent clusters} and the level set $\{(x,0):x\in V(T)\}$ becomes infinite.
To prove \Cref{thm:dimension_jump} in this case, the key step is to improve Schonmann's estimate to a point-to-fiber estimate of the form
\begin{equation}
\label{eq:point_to_fiber_intro}
\mathbb{P}_{p}\bigl((x,0)\leftrightarrow \{(y,n):n\in \mathbb{Z}\}\bigr)\leq C_p (k-1)^{-\frac{1}{2}d(x,y)} 
\end{equation}
for every $p<p_u$ and $x,y\in V(T)$. Our proof of this stronger estimate relies in part on the ``subgroup relativization'' techniques developed in our companion paper \cite{2409.12283}, which guarantee that the total size of the intersection of the cluster of $x$ with the fiber $\{(y,n):n\in \mathbb{Z}\}$ has an exponential tail whenever $p<p_u$, uniformly in $x$ and $y$.

\medskip

\textbf{The $L^2$ boundedness threshold.}
Our next result concerns the $L^2$ boundedness threshold $p_{2\to 2}$ as introduced in \cite{1804.10191}. 
 Given a non-negative matrix $M$, we define the $L^2\to L^2$ operator norm of $M$ by
\[\|M\|_{2\rightarrow 2}:=\sup\{\|Mf\|_{2}: f \text{ finitely supported, }\|f\|_{2}=1\}.\]
Let $G$ be a connected, locally finite transitive graph, let $\tau_p(x,y)$ be the \textbf{two-point function} (i.e., the probability that $x$ is connected to $y$ in $G_p$), and let $T_p\in [0,\infty]^{V^2}$ denote the matrix $T_p(x,y):=\tau_p(x,y)$. We define the critical value
\[p_{2\rightarrow 2}:=\sup\{p\in [0,1]:\|T_p\|_{2\rightarrow 2}<\infty \}.\]
It is easily seen that $p_c\leq p_{2\rightarrow 2}\leq p_u$ for any transitive graph. 
 In \cite{1804.10191}, it is proven that $p_c<p_{2\rightarrow 2}$ for every transitive, nonamenable, Gromov hyperbolic graph and conjectured that the same strict inequality holds for all connected, locally finite, transitive, nonamenable graphs; this conjecture, if true, would imply that $p_c<p_u$ as well as many other strong quantitative statements about critical and near-critical percolation \cite{hutchcroft20192,hutchcroft2022slightly,hutchcroft2024slightly}. Moreover, it is shown in \cite{hutchcroft2024slightly} that infinite clusters of percolation in the regime $p_c<p<p_{2\to 2}$ have tree-like geometry in various strong quantitative senses, strengthening the analogy with branching random walk. It was also shown in \cite[Section 6.2]{1804.10191} that infinite clusters are always non-unique at $p_{2\to 2}$ on any transitive graph, so that e.g.\ tesselations of the hyperbolic plane must have the strict inequality $p_c<p_{2\to 2}<p_u$. While it is plausible that the equality $p_u=p_{2\to 2}$ holds in many of the examples that are known to have non-uniqueness at $p_u$, it remained open to demonstrate a single nonamenable transitive graph with this property.

\medskip

Our next main result establishes $T\times \mathbb{Z}^d$ and $\LL(T)$ as the first nonamenable examples proven to have $p_{2\to2}=p_u$. (Both examples are known to have $p_c<p_u$ by the results of \cite{Hutchcroftnonunimodularperc}.)

\begin{theorem}
\label{thm:lamplighter_on_tree_p22}
Let $T$ be a $k$-regular tree for some $k\geq 3$. If $G$ is either $\LL(T)$ or $T\times H$ for some infinite amenable Cayley graph $H$, then the critical values $p_{2\to 2}(G)$ and $p_u(G)$ coincide. 
\end{theorem}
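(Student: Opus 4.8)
The plan is to prove the nontrivial inequality $p_{2\to 2}(G)\ge p_u(G)$; the reverse inequality is the standard bound $p_{2\rightarrow 2}\le p_u$ noted above, so it suffices to show $\|T_p\|_{2\to 2}<\infty$ for every $p<p_u$. I describe the argument for $G=T\times H$; the case of $\LL(T)$ is analogous, working with the cosets of the ``lamp subgroup'' $\bigoplus_{V(T)}\mathbb{Z}_2$ in place of the fibres of $\pi$, and is (as the text indicates) easier.

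Write $\mathcal{F}_y=\{y\}\times V(H)$ for the fibre of $\pi$ over $y\in V(T)$ and set
\[
\varphi_p(n):=\mathbb{E}_p\bigl[\,|K_{(x,o_H)}\cap \mathcal{F}_y|\,\bigr]\qquad\text{whenever } d_T(x,y)=n,
\]
which is well defined by transitivity of $T\times H$. Three properties of $\varphi_p$ drive the proof. (i) For every $p<p_u$ the random variable $|K_{(x,o_H)}\cap\mathcal{F}_y|$ has an exponential tail uniformly in $x,y$ by the ``subgroup relativization'' results of \cite{2409.12283}; in particular $\varphi_p(n)<\infty$ for all $n$. (ii) $\varphi_p$ is submultiplicative: if $d_T(x,y)=m+n$ and $z$ is the point of the geodesic $[x,y]$ at distance $m$ from $x$, then every open path from $(x,o_H)$ to $\mathcal{F}_y$ crosses $\mathcal{F}_z$, and splitting such a (simple) path at its last visit to $\mathcal{F}_z$, applying the BK inequality, and summing over the endpoint in $\mathcal{F}_y$ yields $\varphi_p(m+n)\le\varphi_p(m)\varphi_p(n)$; hence the limit $\beta(p):=-\lim_{n}\tfrac1n\log_{k-1}\varphi_p(n)$ exists and $\varphi_p(n)\le C_{p,\beta'}(k-1)^{-\beta'n}$ for every $\beta'<\beta(p)$. (iii) The point-to-fibre estimate \eqref{eq:point_to_fiber_intro} together with (i) gives $\beta(p)\ge\tfrac12$ for all $p<p_u$.

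Granting these, the conclusion follows from a weighted Schur test on $T\times H$ with $h(x,a):=(k-1)^{-d_T(o,x)/2}$: since $T_p$ is a non-negative symmetric matrix, it suffices to verify $\sum_{(y,b)}\tau_p((x,a),(y,b))\,h(y,b)\le C_p\,h(x,a)$ uniformly in $(x,a)$. As $h$ is independent of the $H$-coordinate, the left-hand side equals $\sum_{y\in V(T)}(k-1)^{-d_T(o,y)/2}\varphi_p(d_T(x,y))$; inserting the bound $\varphi_p\le C_p(k-1)^{-\beta'd_T}$ for some $\tfrac12<\beta'<\beta(p)$ and grouping the sum over $y$ according to the confluence of $[o,y]$ and $[o,x]$ reduces it to two geometric series — the branching factor $(k-1)^{m'}$ for an excursion of length $m'$ off the geodesic $[o,x]$ is exactly balanced by $(k-1)^{-\beta'm'-m'/2}$ coming from the two weights, which is summable since $\beta'>\tfrac12$, while the residual sum along $[o,x]$ produces precisely $(k-1)^{-d_T(o,x)/2}$. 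Hence $\|T_p\|_{2\to2}\le C_p<\infty$, so $p\le p_{2\to 2}$.

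The main obstacle is property (iii) in its sharp form, namely the \emph{strict} inequality $\beta(p)>\tfrac12$ for every $p<p_u$: the Schur test above genuinely requires this, and it can fail in the absence of an amenable factor — for the bare tree one has $\beta(p)>\tfrac12$ only for $p<p_{2\to 2}(T)<1=p_u(T)$. Establishing $\beta(p)>\tfrac12$ is where both the hypothesis $p<p_u$ and the relativization machinery of \cite{2409.12283} enter essentially: the amenable factor $H$ (absent from the bare tree) forces the ``fibre traces'' of clusters to be subcritical strictly below $p_u$, which should yield that the expected number of fibres met by two independent clusters, governed by $\sum_{v}\tau_p((o_T,o_H),v)^2$, is finite for $p<p_u$ — a statement essentially equivalent to $\beta(p)>\tfrac12$. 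I would obtain $\beta(p)>\tfrac12$ either by a strict-monotonicity argument for $\beta(\cdot)$ on $(p_c,p_u)$ combined with Schonmann's estimate (which gives $\beta(p_u)\ge\tfrac12$), or by a second-moment argument bounding $\sum_v\tau_p((o_T,o_H),v)^2$ directly, in both cases using \cite{2409.12283} as the source of the required subcriticality of the fibre traces.
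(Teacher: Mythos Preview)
Your Schur-test reduction is correct and would indeed give $\|T_p\|_{2\to 2}<\infty$ once you know $\beta(p)>\tfrac12$ for your \emph{unrestricted} fibre expectation $\varphi_p(n)=\mathbb{E}_p|K_o\cap\mathcal F_y|$. But that strict inequality is the entire content of the theorem, and neither of your proposed routes reaches it. Schonmann's estimate is a \emph{point-to-point} bound $\tau_{p_u}((x,0),(y,0))\le (k-1)^{-d/2}$; summing over the infinite fibre gives no control on $\varphi_{p_u}$, so it does \emph{not} yield $\beta(p_u)\ge\tfrac12$ for your $\beta$. Your appeal to \eqref{eq:point_to_fiber_intro} is circular: that point-to-fibre estimate is a new result of this paper, obtained precisely through the machinery you are trying to bypass. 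And the second-moment quantity you mention, $\sum_v\tau_p(o,v)^2$, is not the one governing $\beta(p)$: the relevant sum is $\sum_y \mathbb P_p(o\leftrightarrow\mathcal F_y)^2$, whose finiteness is tautologically equivalent to $\beta(p)>\tfrac12$ rather than a way to prove it.

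The paper resolves this by working not with your global $\varphi_p$ but with the \emph{slab-restricted} probability $P_p(n)=\mathbb P_p(o\xleftrightarrow{L_{-n,0}}[x])$, for which two things become tractable that fail for $\varphi_p$: (a) $P_p$ is \emph{super}multiplicative, so the Grimmett inequality $\mathbb P_{p^\theta}(A)\ge\mathbb P_p(A)^\theta$ gives strict monotonicity of its decay rate $\beta_p^*$ directly; and (b) a \emph{backscattering} argument (\cref{lem:coming_back_up_with_lamps,lm:beta for direct product}), which needs the slab confinement to control where down-then-up paths land, shows $\beta_p^*\ge\tfrac12$ for all $p<p_u$. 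Together these give $\beta_p^*>\tfrac12$ strictly. The passage from slab to global---exactly the step your $\varphi_p$ presupposes---is then carried out by a nonunimodular Hammersley--Welsh argument (\cref{lem:Hammersley-Welsh}) bounding the tilted susceptibility $\chi_{p,1/2}$, which yields $p_t=p_u$ and hence $p_{2\to2}=p_u$. If you want to salvage the Schur-test route, you would still need the backscattering input for $\beta_p^*$ and some mechanism (such as Hammersley--Welsh) to transfer it to the unrestricted $\varphi_p$; at that point the Schur test is a legitimate alternative endgame to quoting $p_t\le p_{2\to 2}$, but it does not shorten the argument.
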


As we shall see, this theorem is closely related to \Cref{thm:dimension_jump}. To get some indication of this, note that if $p<p_{2\to 2}$ then we can compute via Cauchy-Schwarz that
\[\mathbb{E}|K_o \cap F| = \langle \mathbf{1}_o, T_p \mathbf{1}_F\rangle \leq \|T_p\|_{2\to 2} \|\mathbf{1}_o\|_2^{1/2}\|\mathbf{1}_F\|_2^{1/2} \leq C_p |F|^{1/2}
\]
for every finite set $F \subseteq V$. Intuitively, the exponent $1/2$ appearing here should be thought of as ``the same'' $1/2$ that appears in the dimension jump established in \Cref{thm:dimension_jump}; both results can be thought of as saying that infinite clusters below $p_u$ take up at most the square root of the available volume. Note also that the branching random walk two-point function is given by $G(x,y)=\sum_{n=0}^\infty (\overline{\mu})^n P^n(x,y)$ and therefore defines a bounded operator on $L^2$ if and only if $\overline{\mu}<1/\rho$ is below the recurrence threshold. As such, \Cref{thm:lamplighter_on_tree_p22} can be thought of as another instance of percolation having similar behaviour to branching random walk up to the uniqueness threshold.

\subsection{Nonunimodularity}
The proofs of \Cref{thm:dimension_jump,thm:lamplighter_on_tree_p22} will follow \cite{Hutchcroftnonunimodularperc} by making use of the \emph{nonunimodular} structure of the graphs $T \times H$ and $\LL(T)$. Before explaining this further, let us begin by giving precise definitions of the graphs under consideration.
 Let $T$ be the $k$-regular tree with $k\geq 3$, which can be thought of as a Cayley graph of the $k$-fold free product $\mathbb{Z}_2 * \mathbb{Z}_2 * \cdots * \mathbb{Z}_2$ or of the free group on $k/2$ generators if $k$ is even. 
The \textbf{direct product} $G\times H$ or two graphs $G=(V(G),E(G))$ and $H=(V(H),E(H))$ is defined to be the graph with edge set $V(G)\times V(H)$ and edge set $\{\{(v,x),(v,y)\} : v \in V(G), \{x,y\} \in E(H)\} \cup \{\{(u,x),(v,x)\} : x \in V(H), \{u,v\} \in V(G)\}$. 
 We define the \textbf{lamplighter graph} over $T$, denoted by $\operatorname{LL}(T)$, to be the graph with vertex set $V(T) \times \{$finitely supported functions $V(T)\to \{0,1\}\}$ formed by linking two vertices by an edge if one can be reached from the other either by flipping the lamp at the current location of the lighter or moving the lighter one step in the tree. (That is, we work with the ``switch or walk'' Cayley graph of the lamplighter group.)  This is a Cayley graph of the wreath product $\mathbb{Z}_2 \wr (\mathbb{Z}_2 * \mathbb{Z}_2 * \cdots * \mathbb{Z}_2)$, of which the lamp configuration group $\mathscr{L}:=\bigoplus_{v\in T} \mathbb{Z}_2$ is an amenable normal subgroup.

\begin{remark} All our methods also apply to the switch-walk-switch Cayley graph of the wreath product $\mathbb{Z}_2 \wr (\mathbb{Z}_2 * \mathbb{Z}_2 * \cdots * \mathbb{Z}_2)$, as well as to ``tensor products'' $T\times H$ where edges correspond to simultaneous moves in $T$ and $H$. (One can also replace $\mathbb{Z}_2$ with any finitely generated amenable lamp group.)  Our proofs do \emph{not} apply to arbitrary Cayley graphs of these groups, as it relies heavily on the compatibility of our graphs with the radial symmetries of the tree.
\end{remark}



Let $T$ be a $k$-regular tree with $k\geq 3$ and let $G$ be either $T\times H$ with $H$ an amenable Cayley graph or $\LL(T)$. In each case, there is a natural projection map $\pi:G\to T$ which can be interpreted as taking a quotient by a certain amenable subgroup of a group that $G$ is a Cayley graph of. Moreover, this quotient has the property that each automorphism of the tree can be lifted to an automorphism of $G$ that maps fibers to fibers. (Several of the intermediate statements we prove in this paper en route to our main theorems hold for any graph admitting a projection to a tree of this form.)
Fix an orientation of the tree $T$ so that each vertex has exactly one oriented edge emanating from it; picking such an orientation is equivalent to picking an end of the tree $T$ and orienting each edge towards this end. This orientation induces an integer-valued \textbf{height function} $h(u,v)$ on $T$, where $h(u,v)$ is the the number of edges crossed in the forward direction minus the number of edges crossed in the backwards direction in a path from $u$ to $v$. Using the height function, one gets a decomposition of $T$ into \textbf{levels}, so that each vertex has one neighbour in the level above it and $(k-1)$ neighbours in the level below it. This orientation of $T$ also induces a partial orientation on $G$, where only tree edges are oriented, along with an associated height function on $G$ defined by $h(x,y)=h(\pi(x),\pi(y))$. We write $\Gamma$ for the group of automorphisms of $G$ that preserve this orientation. The group $\Gamma$ is nonunimodular, with modular function
\[
\Delta(x,y) = (k-1)^{h(x,y)}.
\]
This means that if $F$ is a function sending pairs of vertices in  $G$ to $[0,\infty]$ such that $F(\gamma x, \gamma y)=F(x,y)$ for every two vertices $x,y$ and every $\gamma\in \Gamma$ then
\begin{equation}
\label{eq:tiltedMTP}
\sum_x F(o,x) = \sum F(x,o)\Delta(o,x).
\end{equation}
The identity \eqref{eq:tiltedMTP} is known as the \textbf{tilted mass-transport principle}. (For \emph{unimodular} transitive automorphism groups, the same identity holds with $\Delta\equiv 1$, in which case it is simply called the mass-transport principle.) Further background on nonunimodular transitive groups of automorphisms and their modular functions can be found in \cite{Hutchcroftnonunimodularperc}.

The nonunimodular transitive group of automorphisms $\Gamma$ allows us to define several further percolation thresholds.
For a vertex $x$ of $G$ and let $[x]:=\pi^{-1}(\pi(x))$ denote the set of vertices in $G$ that have the same projection to the tree as $x$. For each vertex $x$ of $G$ and $-\infty \leq n \leq m \leq \infty$, we write $L_{m,n}(x)$ for the set of vertices of $G$ whose height difference from $x$ is between $n$ and $m$, writing $L_n=L_{n,n}$. Given also a percolation configuration on $G$, we write $X^{n,m}_l(x)$ for the number of vertices in $L_l(x)$ that are connected to $x$ by an open path using only vertices of $L_{n,m}(x)$. In all cases, we will omit the argument $x$ when taking $x=o$.
The tilted mass-transport principle implies that
\[
\mathbb{E}_p X_l^{a,b} = (k-1)^{-l}\mathbb{E}_p X_{-l}^{a-l,b-l}
\]
for every $l\in \mathbb{Z}$ and $-\infty \leq a \leq b\leq + \infty$ with $a\leq l \leq b$.
 The \textbf{tilted susceptibility} is defined for each $p\in [0,1]$ and $\lambda \in \mathbb R$ by
\[
\chi_{p,\lambda} := \sum_x \mathbb{P}_p(o \leftrightarrow x) \Delta(o,x)^\lambda = \sum_{l\in \mathbb Z} (k-1)^{\lambda l} \mathbb{E}_p X_l^{-\infty,\infty} 
\]
so that $\chi_{p,0}$ is the usual susceptibility. As explained in \cite[Section 6.2]{Hutchcroftnonunimodularperc}, for each fixed $p\in [0,1]$ the tilted susceptibility is a convex function of $\lambda$ satisfying $\chi_{p,\lambda}=\chi_{p,1-\lambda}$, so that its minimum is always attained at $\lambda =1/2$.
We define the parameters $p_h$ and $p_t$, known as the \textbf{heaviness threshold} and \textbf{tiltability threshold}, by
\begin{multline*}
p_h=p_h(G,\Gamma) := \inf\bigl\{p: \text{there exist clusters of unbounded height}\bigr\}
\\= \inf\bigl\{p: X_0^{-\infty,\infty} = \infty \text{ with positive probability}\bigr\},
\end{multline*}
where the equivalence between these two definitions was proven by Timar \cite[Lemma 5.2]{timar2006percolation},
and 
\[
p_t =p_t(G,\Gamma):= \sup\bigl\{p: \chi_{p,1/2} < \infty\bigr\}=\sup\bigl\{p: \chi_{p,\lambda} < \infty \text{ for some $\lambda \in \mathbb R$}\bigr\}.
\]
Both the heaviness threshold and tiltability threshold are defined more generally for connected locally finite graphs equipped with a quasi-transitive nonunimodular group of automorphisms, and we refer the reader to \cite{Hutchcroftnonunimodularperc} for further discussion.
From the definitions, we obviously have $p_c \leq p_t \leq p_h \leq p_u$; the main result of \cite{Hutchcroftnonunimodularperc} is that $p_c<p_t$ for any connected, locally finite graph equipped with a quasitransitive nonunimodular group of automorphisms, so that $p_c<p_u$ for any graph admitting such a group of automorphisms. 
It was also shown in the proof of \cite[Theorem 2.9]{1804.10191} that $p_t(G,\Gamma) \leq p_{2\to 2}(G) \leq p_u(G)$ for any graph $G$ and any nonunimodular quasitransitive automorphism group $\Gamma$ of $G$.

Our final main result strengthens \Cref{thm:lamplighter_on_tree_p22} to establish equality between all four thresholds $p_h$, $p_t$, $p_{2\to 2}$, and $p_u$ for the examples we treat.
It suffices to prove that $p_t=p_u$ since the inequalities $p_t\leq p_h,p_{2\to 2}\leq p_u$ always hold.

\begin{theorem}
\label{thm:lamplighter_on_tree}
Let $T$ be a regular tree of degree $k\geq 3$. Suppose that $G$ is either $\LL(T)$ or $T\times H$ for some amenable Cayley graph $H$, and let 
 $\Gamma$ be the nonunimodular transitive group of automorphisms of $G$ that fix some given end of the underlying tree. Then  $p_u(G)=p_{2\to 2}(G)=p_t(G,\Gamma)=p_h(G,\Gamma)$.
\end{theorem}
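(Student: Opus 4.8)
The plan is to prove the single nontrivial equality $p_t=p_u$, since $p_t \le p_h, p_{2\to 2} \le p_u$ always holds on a nonunimodular transitive graph by the remarks quoted above; thus $p_t=p_u$ forces all four to agree. Equivalently, since trivially $p_t\le p_u$, it suffices to show that if $p<p_u$ then $\chi_{p,1/2}<\infty$, i.e.\ that $\sum_{l\in\mathbb Z}(k-1)^{l/2}\,\mathbb E_p X_l^{-\infty,\infty}<\infty$. We use the decomposition of the full fiber sum into an ``upward'' part ($l\ge 0$) and a ``downward'' part ($l\le 0$); by the symmetry $\chi_{p,\lambda}=\chi_{p,1-\lambda}$ (equivalently the tilted mass-transport identity $\mathbb E_p X_l^{a,b}=(k-1)^{-l}\mathbb E_p X_{-l}^{a-l,b-l}$) the two parts are controlled by each other, so it is enough to bound $\sum_{l\ge 0}(k-1)^{l/2}\mathbb E_p X_l^{-\infty,\infty}$.

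First I would reduce from the two-sided fiber quantity $X_l^{-\infty,\infty}$ to the one-sided quantity $X_l^{0,\infty}$ (paths staying at height $\ge 0$). A cluster reaching level $l>0$ from $o$ must cross level $0$, and one can resum over the \emph{last} vertex at height $0$ on the relevant path, or better, use the tree structure: removing one tree-edge at height $1$ splits off a subtree, so a union bound over ancestors lets one express $X_l^{-\infty,\infty}$ in terms of the descendants' counts, exactly as in the ladder/renewal arguments of \cite{Hutchcroftnonunimodularperc}. The point of this reduction is that $X_0^{0,\infty}$ is the number of vertices in the fiber $[o]$ connected to $o$ staying weakly above height $0$, and this is where the key external input enters: by the subgroup-relativization results of the companion paper \cite{2409.12283} (the same input used in the discussion of \eqref{eq:point_to_fiber_intro}), for every $p<p_u$ the random variable $X_0^{0,\infty}$ (indeed the whole fiber intersection $|K_o\cap[o]|$) has an exponential tail, hence finite exponential moments $\mathbb E_p \zeta^{X_0^{0,\infty}}<\infty$ for $\zeta>1$ sufficiently close to $1$, uniformly over the choice of root.

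The main step is then a generating-function / branching comparison on the tree. Fix $p<p_u$ and let $\zeta>1$ be in the range where the fiber intersection has finite $\zeta$-th exponential moment. Explore $K_o$ level by level in the downward direction on $T$: the contribution of level $l$ to $X_l^{0,\infty}$ is, conditionally, dominated by a sum over the $\le (k-1)^{l}$ potential tree-vertices at level $-l$ of independent-ish copies of the fiber-intersection variable, thinned by the requirement that the spine tree-edges are open. Writing $f_l(x)=\mathbb E_p[x^{X_l^{0,\infty}}]$ and using the finite exponential moment bound together with the BK/Reimer inequality to decouple, one gets a recursive inequality of the form $\mathbb E_p X_{l+1}^{0,\infty} \le (k-1)\,c_p\,\mathbb E_p X_l^{0,\infty}$ where the effective branching number $c_p=(k-1)\,p\cdot(\text{something controlled by the fiber moment})$ must be shown to satisfy $(k-1)c_p<(k-1)^{1/2}$ — no: that is false for $p$ near $p_u$, so instead one argues directly that for \emph{every} $p<p_u$ there is $\epsilon>0$ with $\mathbb E_p X_l^{0,\infty}\le C(k-1)^{(1-\epsilon)l}$, using that non-uniqueness at all $p'\in(p,p_u)$ forces each infinite cluster to occupy a zero-density (indeed exponentially thin) slice of each level — this is the heart of the matter and is exactly the level-set analogue of the dimension bound $\delta_H(p_u)\le 1/2$. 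Summing, $\sum_{l\ge 0}(k-1)^{l/2}\mathbb E_p X_l^{0,\infty}\le C\sum_l (k-1)^{(1/2-\epsilon)l}<\infty$ once $\epsilon$ beats the gap — and here one genuinely needs $\epsilon>1/2$, i.e.\ the bound $\mathbb E_p X_l^{0,\infty}\le C (k-1)^{l/2-\delta l}$ uniformly for $p<p_u$, which is precisely where the companion-paper exponential-tail input is used in its strong ``uniform over $p<p_u$'' form rather than just at a fixed $p$.

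The step I expect to be the main obstacle is establishing that uniform subexponential-in-the-right-constant bound $\mathbb E_p X_l^{0,\infty}\lesssim (k-1)^{(1/2-\delta)l}$ for all $p<p_u$ with $\delta>0$. A soft argument only gives exponential growth with rate $<(k-1)$; promoting this to rate $<(k-1)^{1/2}$ requires a genuinely probabilistic self-intersection input, namely that two independent clusters below $p_u$ meet the level-$l$ fiber in a set of expected size $o((k-1)^{l/2})$ — the ``birthday problem'' heuristic mentioned after \eqref{eq:point_to_fiber_intro}. My plan is to obtain this by combining the exponential fiber-tail bound of \cite{2409.12283} (which gives the needed uniformity and a quantitative spectral-radius-type gap as $p\uparrow p_u$) with a second-moment/tree-indexed-walk argument: the expected number of pairs of vertices of $K_o$ in level $-l$ is governed by a two-point sum that, by the tilted mass-transport principle and the exponential fiber moment, is bounded by the $(k-1)^l$-fold sum of a convergent series, yielding the $(k-1)^{l/2}$ scaling with room to spare below $p_u$. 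Once this is in hand the summation in $\chi_{p,1/2}$ converges and $p_t=p_u$ follows, completing the proof.
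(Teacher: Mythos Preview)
Your overall plan is sound: reduce to $\chi_{p,1/2}<\infty$ for $p<p_u$, pass to a half-space quantity, and use the companion paper's exponential tail on $|K_o\cap[o]|$. You also correctly isolate the crux, namely the bound $D_p(n)=\mathbb E_p X_{-n}^{-n,0}\le (k-1)^{(1/2-\delta)n}$ (equivalently $\beta_p^*>1/2$) for each fixed $p<p_u$.

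The gap is in how you propose to obtain that crux. The second-moment/birthday sketch in your last paragraph does not yield a \emph{first-moment upper bound} on $D_p(n)$: controlling the expected number of \emph{pairs} of points of $K_o$ in a level goes the wrong direction, and Schonmann's birthday argument only produces the two-point estimate $\tau_p(x,y)\le (k-1)^{-d(\pi x,\pi y)/2}$ along a slice, not the point-to-fiber bound you need (the introduction explicitly flags this as the gap to be closed). Separately, the exponential fiber-tail input is indifferent to the tree direction; it lets you show $E_p(n)\asymp P_p(n)$ up to polynomial factors (this is exactly Lemma~\ref{lem:probabilities_and_expectations}), but carries no information about the specific constant~$1/2$. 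Nothing in your proposal explains where $1/2$ would come from.

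The paper's argument for $\beta_p^*\ge 1/2$ is a \emph{backscattering} lower bound (Lemmas~\ref{lem:coming_back_up_with_lamps} and~\ref{lm:beta for direct product}): if $\beta_p^*<1/2$, then by going from $o$ down to some fiber in $L_{-n}^*$ (probability $\approx(k-1)^{-\beta_p^* n}$), marking the bottom, and coming back up to $[o]$, FKG gives
\[
\mathbb E_p|K_o\cap[o]| \;\gtrsim\; (k-1)^{n}\, P_p(n)\,E_p(n)\;=\;(k-1)^{(1-2\beta_p^*)n+o(n)}\to\infty,
\]
contradicting $\mathbb E_p|K_o\cap[o]|<\infty$ for $p<p_u$. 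For $\LL(T)$ a single lamp flip at the bottom separates the return points and the inequality is immediate; for $T\times H$ one must first show (using amenability of $H$ via Kesten's theorem, Lemma~\ref{lm:beta for direct product_flat}) that the downward connection can be made to land at a \emph{prescribed} $H$-coordinate without losing the exponent, and then run a supercritical embedded branching process to force two fibers to stay close forever. Strict monotonicity of $p\mapsto\beta_p^*$ (Lemma~\ref{lem:beta*_strictly_decreasing}) then upgrades $\ge 1/2$ to $>1/2$ strictly below $p_u$.

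Finally, to pass from $\beta_p^*>1/2$ to $\chi_{p,1/2}<\infty$, the paper does not use a simple renewal at level $0$ as you suggest but a full Hammersley--Welsh unfolding (Lemma~\ref{lem:Hammersley-Welsh}): any open path in $L_{-\infty,0}$ is decomposed into a sequence of slab crossings of \emph{strictly decreasing} heights, and the resulting product over decreasing integer sequences is bounded by $\prod_n(1+\sqrt{D_p(n)U_p(n)})\le\exp\sum_n (k-1)^{-n/2}D_p(n)$, which converges precisely when $\beta_p^*>1/2$. Your recursive inequality is in this spirit but is not developed, and in any case cannot close without the backscattering input that pins down the exponent $1/2$.
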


The proof of this theorem is based on the \emph{Hammersley-Welsh argument} \cite{MR0139535} from the study of self-avoiding walk, and more specifically on the nonunimodular version of this argument developed in \cite{1709.10515}. Roughly speaking, we use this argument to show that the tilted susceptibility $\chi_{p,1/2}$ is finite whenever the Hausdorff dimension of the limit set is strictly less than $1/2$ via certain inequalities between generating functions. We also use \Cref{thm:lamplighter_on_tree} and other nonunimodular techniques from \cite{Hutchcroftnonunimodularperc} in our proof that the Hausdorff dimension of limit sets depends continuously on $p$ in the interval $(p_c,p_u]$.

\begin{remark}
The equality or strict inequality of the thresholds $p_t$, $p_h$, $p_{2\to 2}$, and $p_u$ are known to be sensitive to both the choice of graph and the choice of transitive automorphism group of the graph. Timar \cite{timar2006percolation} proved that if the subgraphs induced by the slabs $L_{n,m}$ for $n,m$ finite are all amenable (as is the case in our example) then $p_h=p_u$, a fact that can also be deduced from the relative Burton-Keane theorem of \cite{2409.12283}. It was conjectured by the first author in \cite{Hutchcroftnonunimodularperc} that the converse is also true, so that if slabs are nonamenable then $p_h<p_u$.
In \cite[Section 8.1]{Hutchcroftnonunimodularperc}, two different nonunimodular transitive groups of automorphisms of the $4$-regular tree are considered: The group $\Gamma$ fixing an end, and the group $\tilde \Gamma$ fixing a so-called $(1,1,2)$-orientation of the tree. For the first group $\Gamma$, one can compute that $p_t=p_{2\to 2}=1/\sqrt{3}<p_h=p_u=1$. For the second group $\tilde \Gamma$, it is proven that
\[
p_t=p_h=\frac{1}{6}+\frac{\sqrt{2}}{3}-\frac{1}{6}\sqrt{4\sqrt{2}-3},
\]
which is strictly less than both $p_{2\to 2}=1/\sqrt{3}$ and $p_u=1$. (Of course, $p_{2\to 2}$ and $p_u$ do not depend on the choice of transitive group of automorphisms.) The mechanism causing the equality $p_t=p_h$ to hold in this example is very similar to that causing the equality $p_t=p_u$ to hold for the lamplighter on the tree, but is easier to establish due to the simpler structure of the graph. See also \cite[Section 8.2]{Hutchcroftnonunimodularperc} for an example of a pair $(G,\Gamma)$ with $p_h<p_u<1$.
\end{remark}

\section{Proof}

\subsection{The Hausdorff dimension from supermultiplicative sequences}

For the remainder of the paper, we will fix a graph $G$ which is either $T\times H$ for an amenable transitive graph $H$ or $\LL(T)$, and let $\pi:G\to T$ be the associated projection map. We will work with the following characterization of the uniqueness threshold established in our companion paper \cite{2409.12283}.

\begin{lemma}
\label{lem:p_u_fiber_def}
Every cluster has finite intersection with every fiber $[x]$ a.s.\ if and only if $p\leq p_u$.
\end{lemma}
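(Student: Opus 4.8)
We want to show: every cluster has finite intersection with every fiber $[x]$ almost surely if and only if $p \leq p_u$. We prove the two directions separately, and both will rely on the interplay between the fiber structure and the number of infinite clusters.

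\textbf{The direction $p \leq p_u \Rightarrow$ finite fiber intersections.} The plan is to argue by contradiction: suppose some cluster has infinite intersection with some fiber with positive probability at a parameter $p \leq p_u$. Since the fibers are orbits of the amenable subgroup $\mathscr{L}$ (the lamp group, or the $H$-factor) and each fiber is the vertex set of an amenable transitive graph (a copy of $H$, or a point for $\LL(T)$ — in the lamplighter case the fiber is the abelian group $\bigoplus_v \mathbb{Z}_2$ which is amenable), one expects to run a Burton–Keane style argument \emph{relative to the fiber}. Concretely, restrict attention to the subgraph induced on a single slab or on the fiber itself; an infinite cluster-fiber intersection forces the existence of infinitely many "trifurcation-like" configurations within the amenable fiber, which contradicts amenability via the standard counting bound (the boundary-to-volume ratio in an amenable graph cannot support a positive density of trifurcations). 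The subtlety is that the cluster leaves the fiber, so the right object is not trifurcations of the cluster but trifurcations of the \emph{fiber-restricted connectivity structure}; this is precisely the content of the relative Burton–Keane theorem from the companion paper \cite{2409.12283}, which I would invoke. The conclusion is that at any $p$ where fiber intersections can be infinite, one in fact gets a unique infinite cluster (uniqueness "within the slab" propagates to global uniqueness since slabs exhaust $G$), forcing $p \geq p_u$, hence $p > p_u$ unless we are exactly at $p_u$ — and a separate argument (e.g. continuity/the fact that the uniqueness phase is open, or directly that at $p_u$ itself Schonmann-type estimates give finite fiber intersections) rules out equality. So $p \leq p_u$ implies finite fiber intersections.

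\textbf{The direction finite fiber intersections $\Rightarrow p \leq p_u$.} Contrapositively, suppose $p > p_u$, so there is a unique infinite cluster $\mathcal{C}$ a.s. I would show this unique cluster must meet some (hence every, by transitivity and ergodicity) fiber infinitely often. The idea: fix a fiber $[x] = \pi^{-1}(v)$. Uniqueness means that for any two vertices $y, z$ in $[x]$ that both lie in infinite clusters, they lie in the \emph{same} cluster. Now the fiber $[x]$ is infinite, and within any large ball of $G$ a positive density of vertices of $[x]$ lie in the (unique) infinite cluster — this follows because $\mathbb{P}_p(x \in \mathcal{C}) > 0$ and, using the fiber-preserving automorphisms lifting tree automorphisms together with automorphisms acting within the fiber, the events $\{y \in \mathcal{C}\}$ for $y \in [x]$ all have the same positive probability. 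By a Borel–Cantelli / ergodicity argument (the lamp subgroup acts ergodically on the fiber), infinitely many vertices of $[x]$ belong to $\mathcal{C}$ a.s., and by uniqueness they all belong to the \emph{one} infinite cluster. Hence that cluster has infinite intersection with $[x]$, contradicting the hypothesis. Therefore finite fiber intersections force $p \leq p_u$.

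\textbf{Main obstacle.} The delicate point is the boundary case $p = p_u$ in the first direction, and more broadly making the relative Burton–Keane argument airtight: one must ensure that "infinite cluster-fiber intersection" is genuinely incompatible with amenability of the fiber even though the cluster is free to wander through the non-amenable tree direction between its visits to the fiber. This is exactly where the machinery of \cite{2409.12283} (relativization to an amenable normal subgroup, and the relative Burton–Keane theorem) does the heavy lifting, so in practice the proof reduces to carefully citing and applying those results; the remaining work is bookkeeping to translate between the "number of infinite clusters" picture and the "fiber intersection" picture. I expect the cleanest writeup simply states that Lemma~\ref{lem:p_u_fiber_def} is established in \cite{2409.12283} and recalls the one-line reason in each direction as above.
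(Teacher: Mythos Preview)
Your proposal is essentially correct and lands on the same approach as the paper: both directions are consequences of the subgroup-relativization results in \cite{2409.12283}, and the paper's proof is literally a two-line citation of Theorem~1.9 and Proposition~1.9 there (finite fiber intersection $\Leftrightarrow$ non-uniqueness) together with the known non-uniqueness at $p_u$ for these graphs \cite{2409.12283,MR1770624}. Your sketches of the two directions (relative Burton--Keane for one, ergodicity of the fiber action under uniqueness for the other) accurately describe what those cited results are doing.

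One small wrinkle: your handling of the boundary case $p=p_u$ is imprecise. ``The uniqueness phase is open'' is not a general theorem and is not what is used; likewise ``Schonmann-type estimates'' only directly cover $T\times\mathbb{Z}$. The paper instead simply invokes the fact that there is non-uniqueness \emph{at} $p_u$ for these particular graphs, which is known from \cite{MR1770624} (nonamenable products) and \cite{2409.12283} (amenable normal subgroup of exponential growth, covering the lamplighter). Once you have non-uniqueness at $p_u$, the equivalence ``finite fiber intersection $\Leftrightarrow$ non-unique'' from \cite{2409.12283} handles $p=p_u$ with no extra work.
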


\begin{proof}[Proof of \cref{lem:p_u_fiber_def}]
Since the fiber $[o]$ is an amenable normal subgroup of a group for which $G$ is a Cayley graph, it follows from \cite[Theorem 1.9 and Proposition 1.9]{2409.12283} that the intersection $|K_o \cap [o]|$ is almost surely finite if and only if there is not a unique infinite cluster in $G$. The claim follows since the infinite cluster is not unique at $p_u$ by the results of \cite{2409.12283,MR1770624}.
\end{proof}

We fix an end of $T$ and define the height function $h$ and the levels $L_{n,m}$ as above. We write $\{x\xleftrightarrow[]{L_{n,m}} y\}$ for the event that $x$ is connected to $y$ by an open path in the slab $L_{n,m}$.
We also define $L_{-n}^*$ to be the set of vertices $x$ of $G$ for which $\pi(x)$ is a generation $n$ descendant of $\pi(o)$ (equivalently, the connected component of $o$ in $L_{-n,0}$) and let 
\[P_p(n) = \mathbb{P}_p(o \xleftrightarrow{L_{-n,0}} [x]) \quad \text{ and } \quad E_p(n) = \mathbb{E}_p[\#\{y\in [x] : o \xleftrightarrow{L_{-n,0}} y\}] \qquad \text{ where $x\in L_{-n}^*$};\]
both quantities are independent of the choice of $x\in L_{-n}^*$ by the symmetries of the graph.

\begin{lemma}
\label{lem:supermult}
The sequence $P_p(n)$ satisfies the supermultiplicative inequality 
\[P_p(n+m+1)\geq p P_p(n)P_p(m)\] for every $n,m \geq 1$.
\end{lemma}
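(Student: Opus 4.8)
The plan is to decompose a connection from $o$ to a fiber lying $n+m+1$ levels below it into three successive stages laid out along a fixed descending ray of the tree, and to combine the stages using independence of disjoint edge sets. Fix a ray $t_0=\pi(o),t_1,t_2,\dots$ in $T$ descending from $\pi(o)$ (so $t_i$ is a generation-$i$ descendant of $\pi(o)$, at height $-i$ relative to $o$), and write $\Phi_j:=\pi^{-1}(t_j)$, so that $P_p(j)=\mathbb{P}_p\bigl(o \xleftrightarrow{L_{-j,0}} \Phi_j\bigr)$ for every $j\ge 1$. The three stages are: (i) an open connection from $o$ to $\Phi_n$ inside the upper slab $L_{-n,0}$; (ii) a single open tree-edge dropping from a vertex of $\Phi_n$ down to $\Phi_{n+1}$; and (iii) an open connection from that landing vertex down to $\Phi_{n+m+1}$ inside the lower slab $L_{-(n+m+1),-(n+1)}$. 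The key point is that the edge sets involved are pairwise disjoint (stage (i) uses edges with both endpoints at heights in $[-n,0]$, stage (iii) uses edges with both endpoints at heights in $[-(n+m+1),-(n+1)]$, and the connecting edge of stage (ii) joins heights $-n$ and $-(n+1)$), so concatenating witnesses for the three stages produces an open path from $o$ to $\Phi_{n+m+1}$ inside $L_{-(n+m+1),0}$, and the three increasing events can be multiplied.

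To carry this out, condition on the $\sigma$-algebra $\mathcal{F}$ generated by the states of all edges contained in $L_{-n,0}$. The event $A:=\{o \xleftrightarrow{L_{-n,0}} \Phi_n\}$ is $\mathcal{F}$-measurable with $\mathbb{P}_p(A)=P_p(n)$, and on $A$ the nonempty countable set $C$ of vertices of $\Phi_n$ joined to $o$ within $L_{-n,0}$ is $\mathcal{F}$-measurable; fixing once and for all an enumeration of $\Phi_n$, let $z$ be the first element of $C$ in this enumeration and let $z'$ be the unique neighbour of $z$ with $\pi(z')=t_{n+1}$, so that $z,z'$ are $\mathcal{F}$-measurable on $A$ and $\{z,z'\}$ is a tree-edge joining heights $-n$ and $-(n+1)$. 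Since $\{z,z'\}$ together with all edges contained in $L_{-(n+m+1),-(n+1)}$ lie outside $L_{-n,0}$ and are mutually disjoint, on $A$ we have
\[
\mathbb{P}_p\Bigl(\{z,z'\}\text{ open and } z' \xleftrightarrow{L_{-(n+m+1),-(n+1)}} \Phi_{n+m+1}\ \Big|\ \mathcal{F}\Bigr)=p\cdot P_p(m),
\]
where the equality $\mathbb{P}_p\bigl(z' \xleftrightarrow{L_{-(n+m+1),-(n+1)}} \Phi_{n+m+1}\bigr)=P_p(m)$, valid for every possible value of $z'$, follows by picking $\gamma\in\Gamma$ with $\gamma(o)=z'$ (transitivity of $\Gamma$): since $\gamma$ preserves height differences and maps fibers to fibers, it carries $L_{-m,0}$ onto $L_{-(n+m+1),-(n+1)}$ and the fiber over some generation-$m$ descendant of $\pi(o)$ onto $\Phi_{n+m+1}$, and Bernoulli percolation is $\Gamma$-invariant. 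On $A$ the concatenation of witnesses for the three stages is an open path from $o$ to $\Phi_{n+m+1}$ inside $L_{-(n+m+1),0}$, so
\[
P_p(n+m+1)\ \ge\ \mathbb{E}_p\Bigl[\mathbf{1}_A\,\mathbb{P}_p\bigl(\{z,z'\}\text{ open and } z' \xleftrightarrow{L_{-(n+m+1),-(n+1)}} \Phi_{n+m+1}\ \big|\ \mathcal{F}\bigr)\Bigr]=p\,P_p(m)\,\mathbb{P}_p(A)=p\,P_p(n)\,P_p(m).
\]

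The only delicate point, and the one that makes a naive ``fix a vertex of the fiber and route the connection through it'' argument fail, is that the fibers are infinite: one cannot choose a single vertex of $\Phi_n$ in advance, so one must select $z$ from the random reachable set $C$ and then check that the conditional law of the downstream stages (ii)--(iii) is the same for every possible value of the selected vertex. This is precisely what the deterministic enumeration of $\Phi_n$, together with the transitivity of $\Gamma$ and the fiber-preserving property of its elements, provides; everything else is a routine disjoint-edges computation, and no appeal to the BK inequality is required.
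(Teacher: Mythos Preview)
Your proof is correct and follows essentially the same three-stage decomposition as the paper's proof: condition on the configuration in $L_{-n,0}$, drop one tree edge from a reached vertex of the target fiber, and then connect within the lower slab. The only differences are expository: you make the measurable selection of $z$ explicit via an enumeration and appeal directly to independence of disjoint edge sets, whereas the paper phrases the last step via the Harris--FKG inequality (which here yields the same bound, since the relevant events are in fact independent).
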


\begin{proof}[Proof of Lemma~\ref{lem:supermult}]
Fix $x \in L_{-n}^*=L_{-n}^*(o)$ and $y\in L_{-m-1}^*(x) \subseteq L_{-n-m-1}^*(o)$. Condition on the configuration in $L_{-n,0}$ and suppose $0$ is connected to $[x]$. For each vertex $z$ in $[x]$, we can connect to $[y]$ inside the slab $L_{-m-1,0}(x)=L_{-n-m-1,-n}(o)$, without using any edge whose status has already been revealed, by first taking a tree edge down one level and then connecting from that other endpoint to $[y]$ within the slab $L_{-m-1,-1}(z)$. This event has conditional probability at least $pP_p(m)$ by the Harris-FKG inequality.
\end{proof}

Using Fekete's lemma, Lemma~\ref{lem:supermult} implies that the limit
\[
\beta_p^* := -\lim_{n\to\infty} \frac{\log P_p(n)}{n \log (k-1)} = - \sup_n\frac{\log p P_p(n)}{n \log (k-1)} 
\]
is well-defined, so that $P_p(n)=(k-1)^{-\beta_p^*n \pm o(n)}$ as $n\to\infty$. This quantity has the following basic properties:

\begin{lemma}
\label{lem:beta*_strictly_decreasing}
 $\beta_p^*$ is left-continuous in $p$ and is a strictly decreasing function of $p$ on the set $\{p\in [0,1]:\beta_p^*>0\}$.
\end{lemma}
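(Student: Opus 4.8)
**Proof plan for Lemma~\ref{lem:beta*_strictly_decreasing}.**

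The plan is to handle left-continuity and strict monotonicity separately, using a standard coupling together with the supermultiplicative structure. For \textbf{left-continuity}, I would exploit that each $P_p(n)$ is a polynomial in $p$, hence continuous, and that $\beta_p^* = -\sup_n \frac{\log pP_p(n)}{n\log(k-1)}$ is an infimum of continuous (indeed increasing, by a standard coupling argument) functions of $p$; an infimum of increasing functions is upper semicontinuous from the right and lower semicontinuous from the left, and since each term is increasing and continuous the supremum is automatically left-continuous, giving left-continuity of $\beta_p^*$. (One must be a little careful about the value $+\infty$ for the supremum, i.e.\ $\beta_p^* = -\infty$; but this does not happen since $P_p(n)\le 1$ forces $\beta_p^*\ge 0$, so the supremum is over nonpositive reals and is finite.) Actually the cleanest route: $P_p(n)$ is continuous and nondecreasing in $p$, so $p\mapsto -\frac{\log pP_p(n)}{n\log(k-1)}$ is continuous and nonincreasing, and $\beta_p^* = \inf_n(\cdots)$ is then an infimum of continuous nonincreasing functions, hence upper semicontinuous; being nonincreasing it is automatically right-continuous, and an infimum of continuous functions that is monotone is continuous except possibly for jumps — here a monotone nonincreasing usc function is left-continuous. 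I would spell this last point out carefully as it is the crux of the continuity half.

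For \textbf{strict monotonicity} on $\{p:\beta_p^*>0\}$, fix $p_1<p_2$ with $\beta_{p_1}^*>0$ and aim to show $\beta_{p_2}^*<\beta_{p_1}^*$. The key idea is a one-step enhancement: in the slab $L_{-n,0}$, there is a positive density of "pivotal-type" opportunities, so raising $p$ strictly improves the exponential rate. Concretely, I would compare $P_{p_2}(n)$ to $P_{p_1}(n)$ by a sprinkling/coupling argument. Using the supermultiplicativity from Lemma~\ref{lem:supermult}, it suffices to show that for some fixed block length $N$ we have $P_{p_2}(N) \ge (1+\varepsilon) P_{p_1}(N)$ with $\varepsilon$ chosen so that iterating (via $P_p(aN + (a-1)) \ge p^{a-1} P_p(N)^a$) beats the rate $\beta_{p_1}^*$; more precisely, the improvement at a single scale, compounded, shifts the Fekete limit strictly. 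To produce the single-scale improvement, take a configuration at level $p_1$ connecting $o$ to $[x]$ for $x\in L_{-N}^*$, and observe that with the extra edges available at $p_2$ one can, with conditional probability bounded below, create an additional independent route down one more level and along one more slab; this is essentially the mechanism already used in the proof of Lemma~\ref{lem:supermult}, now exploited to show that the supermultiplicative inequality is \emph{strict} in a quantitative way when $p$ increases. Alternatively, and perhaps more robustly, I would invoke a Russo-type / finite-energy argument: since $\beta_{p_1}^*>0$, the event $\{o \xleftrightarrow{L_{-N,0}}[x]\}$ has probability decaying exponentially, and one can show the derivative $\frac{d}{dp}\log P_p(N)$ is bounded below by a positive constant times $N$, uniformly in $N$, by identifying order-$N$ many disjoint locations where opening a closed edge would create a new connection; integrating from $p_1$ to $p_2$ then gives $P_{p_2}(N) \ge e^{cN(p_2-p_1)}P_{p_1}(N)$, which upon taking $\frac{1}{N}\log$ and $N\to\infty$ yields $\beta_{p_2}^* \le \beta_{p_1}^* - c(p_2-p_1)/\log(k-1) < \beta_{p_1}^*$.

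The \textbf{main obstacle} is the strict-inequality half: one needs a clean lower bound on the number of "useful" edges — edges that are closed but whose opening would extend a connection from $o$ to the target fiber — and this count must be of order $N$ with a constant that does not degrade as $N\to\infty$, even though the connection events themselves are exponentially unlikely. The natural way to get this is a second-moment or regeneration argument: conditionally on $\{o\xleftrightarrow{L_{-N,0}}[x]\}$, the open path witnessing the connection passes through every intermediate level, and at each level there is at least one vertex on the path from which a fresh downward tree-edge plus slab-connection can be "switched on"; making this rigorous requires care to ensure the relevant edges are closed and their states not yet revealed, which is exactly the kind of bookkeeping done in the proof of Lemma~\ref{lem:supermult} but now needs a quantitative lower bound rather than just positivity. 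I expect the argument to go through by conditioning on the lowest open path (explored from the top) and using the Harris--FKG inequality together with the fact that the conditional law of the unexplored region below still dominates an independent percolation, so that at each of the $N$ levels an independent improvement attempt succeeds with probability bounded below, yielding the required linear-in-$N$ gain by a standard large-deviations / concentration estimate.
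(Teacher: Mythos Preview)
Your left-continuity argument is essentially the paper's: $\beta_p^*$ is an infimum of continuous functions of $p$, hence upper semicontinuous, and since it is nonincreasing this forces left-continuity. (Your exposition has a slip --- ``being nonincreasing it is automatically right-continuous'' is false; the correct implication, which you state a line later, is that nonincreasing $+$ upper semicontinuous $\Rightarrow$ left-continuous.)

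For strict monotonicity, however, the paper takes a dramatically shorter route than you propose. Instead of a Russo/pivotal-edge argument, it invokes the standard inequality (e.g.\ \cite[Theorem~2.38]{grimmett2010percolation}) that for any increasing event $A$ and any $\theta\in[0,1]$,
\[
\mathbb{P}_{p^\theta}(A)\ \ge\ \mathbb{P}_p(A)^{\theta}.
\]
Applied to $A=\{o\xleftrightarrow{L_{-n,0}}[x]\}$ this gives $P_{p^\theta}(n)\ge P_p(n)^\theta$ for all $n$, hence $\beta_{p^\theta}^*\le \theta\,\beta_p^*$. Writing any $p_2>p_1$ as $p_2=p_1^\theta$ with $\theta<1$ yields $\beta_{p_2}^*\le\theta\beta_{p_1}^*<\beta_{p_1}^*$ whenever $\beta_{p_1}^*>0$, and the proof is complete in two lines.

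Your proposed approach --- bounding $\frac{d}{dp}\log P_p(N)$ below by $cN$ via a linear-in-$N$ count of useful pivotal edges --- would, if it worked, give the same conclusion and even a quantitative Lipschitz bound. But the obstacle you yourself flag is real: conditionally on $\{o\xleftrightarrow{L_{-N,0}}[x]\}$ the cluster may be ``thick'' with many disjoint open crossings, in which case there need not be order-$N$ pivotal edges, and your sketch of ``conditioning on the lowest open path and using FKG on the unexplored region'' is not obviously well-posed in $T\times H$ for general $H$ (there is no canonical ordering making ``lowest'' meaningful, and the domination claim for the unexplored region requires justification). The Grimmett inequality sidesteps all of this: it is, in effect, an integrated form of exactly the Russo bound you want, but one that holds for \emph{every} increasing event with no need to count pivotals.
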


\begin{proof}[Proof of Lemma~\ref{lem:beta*_strictly_decreasing}]
The expression for $\beta_p^*$ as the infimum of the continuous functions 
 ensures that it is upper semicontinuous, and since it is decreasing it must be left-continuous as claimed. We now prove that $\beta_p^*$ is strictly decreasing when it is positive.
It is a general fact \cite[Theorem 2.38]{grimmett2010percolation} that if $A$ is an increasing event then
$\mathbb{P}_{p^\theta}(A) \geq \mathbb{P}(A)^\theta$
for every $p,\theta \in [0,1]$. Applying this inequality to the event $\{o\xleftrightarrow{L_{-n,0}} [x]\}$ with $x\in L_{-n}^*$ yields that
$P_{p^\theta}(n) \geq P_p(n)^\theta$
for every $p,\theta \in [0,1]$ and $n\geq 0$, and hence that
$\beta_{p^\theta}^* \leq \theta \beta_p^*$
for $p,\theta \in [0,1]$. 
\end{proof}

We next argue that the Hausdorff dimension of limit sets can be expressed in terms of $\beta_p^*$, meaning that for the rest of the paper we can work with $\beta_p^*$ rather than directly with the Hausdorff dimension.

\begin{lemma}
\label{lem:beta_dim}
Let $\Lambda$ be the set of limit points in $\partial T$ of the projected cluster $\pi(K_o)$. On the event that $K_o$ is infinite, its Hausdorff dimension $\delta_H(\Lambda)$ is $\mathbb{P}_p$-almost surely equal to $1-\beta_p^*$ for each $p_c<p_c\leq p_u$ and is almost surely equal to $1$ for $p>p_u$.
\end{lemma}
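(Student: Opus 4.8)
The plan is to prove \Cref{lem:beta_dim} by sandwiching the Hausdorff dimension of $\Lambda$ between $1-\beta_p^*$ from both sides, using a first-moment upper bound and a second-moment (energy method) lower bound, working with the covering of $\partial T$ by boundary balls indexed by vertices at level $-n$.

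\paragraph{Reduction and setup.} First I would record that $\Lambda \subseteq \partial T$ is naturally approximated by the generation-$n$ vertices $v\in V(T)$ with $h(v,o)=-n$ that lie in $\pi(K_o)$: the ball $B(v) = \{\xi\in\partial T : \xi\wedge o \text{ is an ancestor of } v \text{ at level } \leq -n\}$ has $d_{\partial T}$-diameter $(k-1)^{-n}$, and the collection of such $B(v)$ over $v$ at level $-n$ with $v\in\pi(K_o)$ covers $\Lambda$ (up to the finitely many extra boundary points coming from the part of $K_o$ above level $0$, which I would handle by restricting attention to $K_o\cap L_{-\infty,0}$, noting that a limit point of $\pi(K_o)$ is a limit point of $\pi(K_o\cap L_{-\infty,0})$ since infinite clusters with unbounded height descend to every level — here one uses that for $p>p_c$ infinite clusters reach arbitrarily low levels, and that $\Lambda$ is determined by the downward part). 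Let $N_n := X_0^{-n,0}$ in the notation of the excerpt but counted at the tree level, i.e.\ the number of level-$(-n)$ tree vertices $v$ with $o\xleftrightarrow{L_{-n,0}}[v]$; then $\mathbb{E}_p N_n = (k-1)^n P_p(n) = (k-1)^{(1-\beta_p^*)n\pm o(n)}$.

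\paragraph{Upper bound.} For the upper bound $\delta_H(\Lambda)\leq 1-\beta_p^*$ on the event $\{|K_o|=\infty\}$: fix $s > 1-\beta_p^*$. The covering of $\Lambda$ by the $B(v)$ at level $-n$ gives $\mathcal H^s$-premeasure at scale $(k-1)^{-n}$ bounded by $N_n (k-1)^{-sn}$, whose expectation is $(k-1)^{(1-\beta_p^*-s)n + o(n)}\to 0$. So $\mathbb{E}_p\bigl[N_n(k-1)^{-sn}\bigr]\to 0$, hence along a subsequence $N_n (k-1)^{-sn}\to 0$ a.s., giving $\mathcal H^s(\Lambda)=0$ a.s.\ and thus $\delta_H(\Lambda)\leq s$ a.s.; let $s\downarrow 1-\beta_p^*$. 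The dimension is then a.s.\ constant on $\{|K_o|=\infty\}$ because $\Lambda$ depends measurably on the configuration and the claimed value $1-\beta_p^*$ is deterministic, but more carefully one should invoke a tail/ergodicity argument: the event $\{\delta_H(\Lambda)\geq t, |K_o|=\infty\}$ has probability that is either $0$ or equal to $\mathbb{P}_p(|K_o|=\infty)$ by insertion-tolerance and the fact that changing finitely many edges cannot change $\delta_H(\Lambda)$ (the limit set is unchanged under finite modifications up to finitely many points). I would spell this out via the standard argument that the conditional law of $\delta_H(\Lambda)$ given $\{|K_o|=\infty\}$ is a.s.\ constant.

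\paragraph{Lower bound.} This is the main obstacle. I want $\delta_H(\Lambda)\geq 1-\beta_p^*$ a.s.\ on $\{|K_o|=\infty\}$, and the natural tool is Frostman's energy method: it suffices to produce, for each $s<1-\beta_p^*$, a (random) measure $\mu$ supported on $\Lambda$ with $\mathbb{E}_p\bigl[\mu(\partial T)\bigr]$ bounded below and $\mathbb{E}_p\bigl[\iint d_{\partial T}(\xi,\zeta)^{-s}\,d\mu(\xi)\,d\mu(\zeta)\bigr]<\infty$, on a positive-probability event. The candidate measure is the limit of $\mu_n := \frac{1}{(k-1)^{(1-\beta_p^*)n}}\sum_{v} \mathbf{1}_{B(v)}$ over level-$(-n)$ vertices $v\in\pi(K_o)$ — but for the second moment to be controlled one needs a uniform (in $n$) bound $\mathbb{E}_p\bigl[\#\{(u,v): o\leftrightarrow[u], o\leftrightarrow[v]\}\bigr] \leq C (k-1)^{(1-2\beta_p^*)n+2\beta_p^* j}$ where $j$ is the generation at which the level-$(-n)$ vertices $u,v$ split, i.e.\ a two-point connection estimate compatible with the growth exponent. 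Obtaining such a near-matching second moment estimate from only the supermultiplicative first-moment information is exactly where difficulty concentrates: a priori $\beta_p^*$ governs only expectations, and clusters could be ``lumpy''. The way around this, which I expect the authors to use, is a standard trick: decompose the connection $o\leftrightarrow[u]$ and $o\leftrightarrow[v]$ at the level-$(-j)$ ancestor $w$ of their split, and use the BK-type / conditional independence structure in the disjoint slabs $L_{-j,0}$ and $L_{-n,-j}$ together with the fiber-finiteness from \Cref{lem:p_u_fiber_def} (for $p\leq p_u$) or, for $p>p_u$ handled separately below. Concretely, $\mathbb{P}_p(o\leftrightarrow[u], o\leftrightarrow[v])$ is at most the expected number of vertices $z$ in $[w]$ connected to $o$ in $L_{-j,0}$, times $P_p(n-j)^2$ — and $\mathbb{E}_p[\#\{z\in[w]: o\xleftrightarrow{L_{-j,0}} z\}] = E_p(j)$, which one must show grows no faster than $(k-1)^{(1-\beta_p^*)j + o(j)}$, i.e.\ $E_p(n)$ and $(k-1)^n P_p(n)$ have the same exponential growth rate. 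That last comparison $E_p(n) \leq (k-1)^{o(n)}(k-1)^n P_p(n)$ is the crux; I would prove it by a subadditivity/supermultiplicativity argument on $E_p$ parallel to \Cref{lem:supermult} combined with the fiber-finiteness input, or by a direct argument that the ``overcounting'' within a fiber is subexponential because $|K_o\cap[x]|$ has exponential tails uniformly below $p_u$ (cf.\ the point-to-fiber estimate \eqref{eq:point_to_fiber_intro}). Granting this, the Paley--Zygmund inequality applied to $\mu_n(\partial T)$ gives a positive-probability event on which $\mu := \lim\mu_n$ (extracted weakly) is nontrivial with finite $s$-energy, so $\delta_H(\Lambda)\geq s$ there; combined with the $0$--$1$ law this upgrades to a.s.\ on $\{|K_o|=\infty\}$, and letting $s\uparrow 1-\beta_p^*$ finishes.

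\paragraph{The case $p>p_u$.} Here the infinite cluster is unique, so $K_o$ contains a positive density of every level and in particular $\pi(K_o)$ contains every vertex at every level far below $o$ with probability bounded below; more precisely one shows $P_p(n)\to c>0$ (equivalently $\beta_p^*=0$) — indeed for $p>p_u$, $o$ is connected to the giant cluster which meets $[x]$ for $x\in L_{-n}^*$ with probability bounded away from $0$ uniformly in $n$, because the unique infinite cluster a.s.\ intersects every fiber infinitely by \Cref{lem:p_u_fiber_def}. Then the same first-moment bound is vacuous and the second-moment bound becomes easy (the connection events at well-separated boundary points are asymptotically independent via uniqueness), yielding a measure of full dimension on $\Lambda$, so $\delta_H(\Lambda)=1$ a.s.\ on $\{|K_o|=\infty\}$. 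I would present this last case briefly since it is softer, and flag that the only genuinely delicate estimate in the whole lemma is the exponential-growth-rate comparison between $E_p(n)$ and $(k-1)^nP_p(n)$ needed for the second moment in the regime $p_c<p\leq p_u$.
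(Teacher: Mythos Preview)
Your upper bound and the reduction to the downward half-space are essentially what the paper does. The two substantive differences are in the non-randomness step and in the lower bound.

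For non-randomness of $\delta_H(\Lambda)$, the paper simply invokes the Lyons--Schramm indistinguishability theorem rather than an insertion-tolerance argument; this is shorter and avoids having to argue that finite edge-modifications leave the limit set unchanged up to finite sets.

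For the lower bound, the paper does \emph{not} use the Frostman energy method. Instead it embeds a Galton--Watson tree into the cluster (picking one representative per reached fiber at depth $N$, exactly as in the proof of \Cref{lm:beta for direct product_flat}) and applies the Hawkes--Lyons theorem to read off the boundary dimension $\log \mathbb{E}|Z_1|/\log(k-1)^N \geq 1-\beta$ for any $\beta>\beta_p^*$. This route needs only the supermultiplicative estimate on $P_p(n)$ and uses no moment information about $|K_o\cap[w]|$, so it works uniformly for all $p_c<p\leq p_u$, including $p=p_u$. Your energy method hinges on a second-moment bound for the fiber intersection, and the exponential-tail input you cite (the analogue of \Cref{lm:decay of intersection with subgroup}) is only available for $p<p_u$; at $p=p_u$ you would have to patch the argument via a monotone coupling and the left-continuity of $\beta_p^*$ from \Cref{lem:beta*_strictly_decreasing}. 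So your approach works, but the paper's Galton--Watson route is cleaner here.

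Two small slips: you write that $E_p(n)$ should share exponential rate with $(k-1)^nP_p(n)$, but $E_p(n)$ counts vertices in a \emph{single} fiber and should be compared to $P_p(n)$ itself (this is exactly the content of \Cref{lem:probabilities_and_expectations}). And for $p>p_u$ the paper observes more directly that the unique infinite cluster accumulates on all of $\partial T$, so $\delta_H(\Lambda)=1$ is immediate without any second-moment computation.
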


\begin{proof}[Proof of \Cref{lem:beta_dim}]
The fact that all infinite clusters have accumulation sets in $\partial T$ with the same, non-random Hausdorff dimension is an immediate consequence of the indistinguishability theorem of Lyons and Schramm \cite{LS99}. When $p>p_u$ this accumulation set is the entire boundary, so that its dimension is trivially equal to $1$. We need to prove that this dimension is equal to $1-\beta_p^*$ for $p_c<p\leq p_u$.

We begin by proving the upper bound on the Hausdorff dimension.
 It follows from \Cref{lem:p_u_fiber_def} and an easy finite-energy argument that if $p_c<p\leq p_u$ then the origin $o$ belongs, with positive probability, to an infinite cluster in which every other vertex belongs to a level of negative height. Since the Hausdorff dimension of the limit set is non-random, it therefore suffices to consider the dimension of the limit set when we restrict percolation to the subgraph whose corresponding tree vertices are descendants of $\pi(o)$. In this graph, the set $\{ x \in \pi(L_{-n}^*) : o \xleftrightarrow{L_{-n,0}} x\}$ encodes an open cover $\mathcal{C}_n$ of the limit set $\Lambda$, namely the set of ends of the tree corresponding to the subtree lying below each $x$ in the set. Since each of these sets has diameter $(k-1)^{-n}$ in $\partial T$, we have for each $\theta>0$ that
\[
\mathbb{E}_p \left[\sum_{U\in \mathcal{C}_n} \operatorname{diam}(U)^\theta\right] = (k-1)^n P_p(n) (k-1)^{-\theta n} = (k-1)^{(1-\theta-\beta_p^*)n\pm o(n)}
\]
as $n\to \infty$. Thus, it follows by Markov's inequality that the Hausdorff dimension of the limit set is at most $1-\beta_p^*$ almost surely. 

The matching lower bound follows by a very similar argument to that used in \cite{HL00,lalley1998limit}, in which one uses the level structure to construct embedded Galton-Watson processes whose limit-set dimension can be computed using the Hawkes-Lyons theorem \cite{Hawkes81,lyons1990random}. Similar constructions are carried out carefully in the proof of \Cref{lm:beta for direct product_flat}; we omit further details here.
\end{proof}

\subsection{Probabilities vs Expectations}

Our next goal is to prove that the two quantities $P_p(n)$ and $E_p(n)$ coincide up to subexponential factors when $p<p_u$. 

\begin{lemma}
\label{lem:probabilities_and_expectations}
For each $0<p<p_u$, there exist constants $C_p,\tilde C_p < \infty$ such that
\[
P_p(n) \leq E_p(n) \leq C_p P_p(n) \log \frac{e}{P_p(n)} \leq \tilde C_p (n+1) P_p(n) 
\]
for every $n\geq 0$. In particular, if $p<p_u$ then $E_p(n)=(d-1)^{-\beta_p^*n\pm o(n)}$ as $n\to\infty$.
\end{lemma}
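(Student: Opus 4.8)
The plan is to prove the nontrivial inequality $E_p(n) \leq C_p P_p(n)\log\frac{e}{P_p(n)}$; the first inequality $P_p(n)\le E_p(n)$ is immediate since $X$ dominates $\mathbf 1[X\ge 1]$, and the last inequality $C_p P_p(n)\log\frac e{P_p(n)}\le \tilde C_p(n+1)P_p(n)$ follows from $P_p(n)\ge pP_p(1)^{n}\ge (\text{const})^n$ by supermultiplicativity, so $\log\frac e{P_p(n)}=O(n)$. The final ``in particular'' then follows because sandwiching $E_p(n)$ between $P_p(n)$ and a polynomial multiple of $P_p(n)$ forces $\log E_p(n)/(n\log(k-1))$ and $\log P_p(n)/(n\log(k-1))$ to have the same limit $-\beta_p^*$.

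For the main inequality, I would first relate $E_p(n)$ to a conditional expectation. Write $N_n := \#\{y\in[x] : o\xleftrightarrow{L_{-n,0}} y\}$ where $x\in L_{-n}^*$, so that $E_p(n) = \mathbb E_p N_n$ and $P_p(n)=\mathbb P_p(N_n\ge 1)$. Then $E_p(n) = P_p(n)\,\mathbb E_p[N_n \mid N_n\ge 1]$, so it suffices to show that the conditional expectation of the fiber-intersection size, given that it is nonempty, is at most $C_p\log\frac e{P_p(n)}$. The key input is the uniform exponential tail for the total fiber intersection: since $p<p_u$, \Cref{lem:p_u_fiber_def} together with the subgroup-relativization results of \cite{2409.12283} guarantees that $|K_o\cap[x]|$ has an exponential tail, uniformly over all vertices $x$ — in particular there are constants $c_p>0$ and $A_p<\infty$, not depending on $n$ or $x$, with $\mathbb P_p(|K_o\cap[x]|\ge m)\le A_p e^{-c_p m}$ for all $m\ge 1$. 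Since $N_n\le |K_o\cap[x]|$, the same tail bound applies to $N_n$.

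Now I would run the standard ``tail bound plus first-moment'' trick. For any threshold $M$,
\[
\mathbb E_p N_n = \sum_{m\ge 1}\mathbb P_p(N_n\ge m) \le M\,\mathbb P_p(N_n\ge 1) + \sum_{m> M}\mathbb P_p(N_n\ge m) \le M\,P_p(n) + \sum_{m>M} A_p e^{-c_p m} \le M\,P_p(n) + \frac{A_p}{c_p}e^{-c_p M}.
\]
Choosing $M = \frac{1}{c_p}\log\frac{1}{P_p(n)}$ (rounded up, harmlessly absorbing constants) makes the second term $O(P_p(n))$, and then $\mathbb E_p N_n \le \frac{1}{c_p}P_p(n)\log\frac 1{P_p(n)} + O(P_p(n)) \le C_p P_p(n)\log\frac e{P_p(n)}$ after adjusting $C_p$, which is exactly the claimed bound. (If $P_p(n)=0$ there is nothing to prove, and if $P_p(n)$ is bounded below — which it is, uniformly in $n$ is false, but it is positive for each fixed $n$ — the bound is trivial; the content is the decay regime.)

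The main obstacle is making sure the exponential tail on $|K_o\cap[x]|$ is genuinely \emph{uniform in $x$} (and hence in $n$), since this is what allows the constants $c_p, A_p$ in the tail bound — and therefore $C_p$ — to be independent of $n$. This uniformity is where one must invoke the subgroup-relativization / relative Burton--Keane machinery of the companion paper \cite{2409.12283} rather than a soft argument: one needs that for $p<p_u$ the intersection of a cluster with a fiber is not merely a.s.\ finite (which is \Cref{lem:p_u_fiber_def}) but has a tail that is controlled uniformly across fibers, using transitivity of $\Gamma$ on the fiber structure. Everything else is routine: the conversion from expectation-of-size to probability-of-nonemptiness via the tail sum, and the verification that $\log\frac{e}{P_p(n)}=O(n)$ from the supermultiplicative lower bound $P_p(n)\ge p\,P_p(1)^n$ established in \Cref{lem:supermult}.
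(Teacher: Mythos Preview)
Your proof is correct and rests on the same key input as the paper's: the uniform-in-$x$ exponential tail on $|K_o\cap[x]|$ for $p<p_u$, which the paper isolates as \Cref{lm:decay of intersection with subgroup}. The only difference is in how that tail is converted into the bound $E_p(n)\le C_p P_p(n)\log\frac{e}{P_p(n)}$. The paper first turns the exponential tail into the moment estimate $\mathbb{E}_p[|K_o\cap[x]|^q]\le (C_p q)^q$, applies H\"older's inequality to get $E_p(n)\le P_p(n)^{(q-1)/q}\,C_p q$, and then optimizes by taking $q=\max\{1,-\log P_p(n)\}$. You instead split the tail sum $\sum_{m\ge 1}\mathbb{P}_p(N_n\ge m)$ at a threshold $M\asymp\log\frac{1}{P_p(n)}$, bounding the first $M$ terms by $P_p(n)$ each and the remainder by the exponential tail directly. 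These are two interchangeable standard devices for extracting the same logarithmic loss from an exponential tail; neither buys anything the other does not. One small cosmetic point: the cleanest lower bound for the last inequality is simply $P_p(n)\ge p^n$ (connect along the tree path), which is what the paper uses, rather than the supermultiplicative form you wrote.
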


The factor of $e$ inside the log ensures that $\log \frac{e}{P_p(n)} \geq 1$. We will deduce \Cref{lem:probabilities_and_expectations} from the following lemma, which is an immediate consequence of the main results of our companion paper \cite{2409.12283}.

\begin{lemma}\label{lm:decay of intersection with subgroup}
If $p<p_u$ then $\sup_x \mathbb{P}_p(|K_o\cap [x]|\geq n)$ decays exponentially fast in $n$. In particular, $\sup_x \mathbb{E}_p|K_o\cap [x]|<\infty$ for every $p<p_u$. 
\end{lemma}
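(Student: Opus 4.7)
The plan is to read this lemma off from the main results of the companion paper \cite{2409.12283}, following exactly the template used to prove \Cref{lem:p_u_fiber_def}. The structural input is that for both $G = \LL(T)$ and $G = T\times H$, the fiber $[o] = \pi^{-1}(\pi(o))$ is a coset of an amenable normal subgroup of a group for which $G$ is a Cayley graph: the lamp configuration subgroup $\mathscr{L} = \bigoplus_{v\in V(T)} \mathbb{Z}_2$ in the lamplighter case, and the subgroup generating the $H$-factor in the direct product case. The companion paper is devoted precisely to percolation relative to such amenable normal subgroups and, via its relative Burton--Keane theorem, characterises $p_u$ as the supremum of parameters at which clusters have almost surely finite intersection with $[o]$. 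The first step is to invoke the accompanying quantitative refinement, which improves a.s.\ finiteness below $p_u$ to exponential decay of $\mathbb{P}_p(|K_o\cap [o]|\geq n)$ in $n$; this can be obtained from the relative finite-cluster statements of \cite{2409.12283} by combining them with a standard finite-energy / exponential-improvement argument of the type used repeatedly in \cite{Hutchcroftnonunimodularperc}.

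The remaining task is to upgrade this exponential bound on the base fiber to one that is uniform over all base points $x$. Here the key observation is the identity
\[
|K_o\cap [x]| = |K_{x'}\cap [x']| \qquad \text{for any } x'\in K_o\cap [x],
\]
valid because $K_{x'}=K_o$ and $[x'] = [x]$ for any such $x'$. A tilted mass-transport argument using \eqref{eq:tiltedMTP}, in which each $y$ distributes a unit of mass uniformly over the elements of $K_y\cap [x]$ on the event that this intersection has size at least $n$, then relates $\mathbb{P}_p(|K_o\cap [x]|\geq n)$ to $\mathbb{P}_p(|K_o\cap [o]|\geq n)$ by a factor that is independent of $x$. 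Combined with the exponential tail from the first step this yields the uniform exponential estimate, and the second assertion $\sup_x\mathbb{E}_p|K_o\cap [x]|<\infty$ then follows immediately by summing the tail.

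The main obstacle I anticipate is the careful bookkeeping of the modular function $\Delta$ in this mass-transport step: since the height $h(o,x)$ can be arbitrarily large, a naive transport will produce factors of $\Delta(o,x)^{\pm 1}$ that ruin uniformity in $x$. The arrangement described above, which transports only within the cosets $[o]$ and $[x]$ and leverages that all vertices in a fiber share the same height relative to $o$, is designed to sidestep this issue. Since this is exactly the kind of issue that drives much of the analysis in \cite{2409.12283}, I expect that the precise uniform statement is already packaged there, so that in practice the proof collapses to a single citation.
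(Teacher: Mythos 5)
Your proposal takes essentially the same route as the paper: everything is read off from the companion paper \cite{2409.12283}. The paper's proof is a two-line citation: from \Cref{lem:p_u_fiber_def} one gets $p_u(G)=p_c([o];G)$ in the notation of \cite{2409.12283}, then exponential decay of $\mathbb{P}_p(|K_o\cap[o]|\geq n)$ follows from the relative sharpness theorem [Theorem 1.8] there, and uniformity over $x$ is [Lemma 5.5] there, which gives $\mathbb{P}_p(|K_o\cap[x]|\geq n)\leq 2\,\mathbb{P}_p(|K_o\cap[o]|\geq n)$. You correctly anticipated that the uniform statement is ``already packaged there.''

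Two small caveats about your backup reasoning. First, the exponential decay is not obtained by ``a standard finite-energy / exponential-improvement argument'' on top of a.s.\ finiteness — it is a genuine relative sharpness-of-phase-transition theorem (a relative analogue of Menshikov / Aizenman--Barsky / Duminil-Copin--Tassion), which is the main theorem of the companion paper and not something you could reconstruct by a soft bootstrap. Second, your sketched mass-transport proof of uniformity doesn't quite close: the identity $|K_o\cap[x]|=|K_{x'}\cap[x']|$ for $x'\in K_o\cap[x]$ only rewrites the event, and any transport comparing the base point $o$ to a point of $[x]$ necessarily crosses $h(o,x)$ levels, which is exactly where the $\Delta(o,x)^{\pm 1}$ you worried about appears. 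Claiming to ``transport only within the cosets'' doesn't resolve this, because the event in question ties together two distinct fibers at different heights. The actual argument in \cite{2409.12283} for the $\leq 2$ bound is not a mass-transport computation; since you defer to the companion paper anyway, this does not affect the correctness of your proposal, but the heuristic you offer in its place would not survive being fleshed out as written.
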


\begin{proof}[Proof of \Cref{lm:decay of intersection with subgroup}]
 In the notation of \cite{2409.12283}, \Cref{lem:p_u_fiber_def} implies that $p_u(G)=p_c([o];G)$. (This equality holds with $[o]$ replaced by any amenable normal subgroup of a group and $G$ a Cayley graph of that group.) The claimed exponential decay therefore follows by relative sharpness of the phase transition \cite[Theorem 1.8]{2409.12283} applied to the subgroup $[o]$ together with \cite[Lemma 5.5]{2409.12283}, which allows us to bound $\mathbb{P}_p(|K_o\cap [x]|\geq n) \leq 2 \mathbb{P}_p(|K_o\cap [o]|\geq n)$.
\end{proof}

\begin{proof}[Proof of Lemma~\ref{lem:probabilities_and_expectations}]
The first inequality follows from Markov's inequality; we focus on the second.
If $p<p_u$, we are by Lemma \ref{lm:decay of intersection with subgroup} that there exists $c_p>0$ such that
\[
\mathbb{P}_p(|K_o\cap [x]| \geq n) \leq e^{-c_p n}
\]
for every vertex $x\in G$ and every $n\geq 0$. 
Equivalently, there exists a constant $C_p<\infty$ such that the moment estimate
\[
\mathbb{E}_p\left[|K_o\cap [x]|^q \right] \leq \int_0^\infty t^{q-1} e^{-c_p t} \mathrm{d} t \leq (C_p q)^q
\]
holds for every vertex $x$ and every $q\in [1,\infty)$. Thus, we have by H\"older's inequality that
\begin{multline*}
\mathbb{E}_p\left[\#\{y\in [x] : o \xleftrightarrow{L_{-n,0}} y\} \right] \leq \mathbb{P}_p(o \xleftrightarrow{L_{-n,0}} [x])^{(q-1)/q} \mathbb{E}_p\left[\#\{y\in [x] : o \xleftrightarrow{L_{-n,0}} y\}^{q} \right]^{1/q}
\\\leq 
\mathbb{P}_p(o \xleftrightarrow{L_{-n,0}} [x])^{(q-1)/q} C_p q
\end{multline*}
for every $n \geq 0$, $x \in L_{-n}$, and $q\geq 1$. Taking $q=\max\{1,-\log P_p(n)\}$ yields that for every $p<p_u$ there exists a constant $\tilde C_p$ such that
\[
E_p(n) \leq \tilde C_p P_p(n) \log \frac{e}{P_p(n)}
\]
for every $n\geq 0$ as claimed. The second inequality follows since $P_p(n)\geq p^n$.
\end{proof}

\subsection{Backscattering}\label{sec:beta>1/2}

In this section we prove that if $\beta^*_p<\frac{1}{2}$ then $p\geq p_u$. We  treat the lamplighter $\LL(T)$ and the product $T\times H$ separately, beginning with the easier case of the lamplighter. In both cases, the idea is to show that if $\beta^*_p<1/2$ then it is easy for a cluster to have a large intersection with the fiber $[o]$ by first growing to a large negative level $L_{-n}$ and then going back up the tree towards height zero, a fact referred to as the ``Backscattering Principle'' by Lalley and Selke \cite{lalley1997hyperbolic} in the context of branching random walk.

\begin{lemma}[Lamplighter backscattering]
\label{lem:coming_back_up_with_lamps} If $G=\LL(T)$ then the inequality\label{lm:beta(LL(T))}
\begin{equation}
\label{eq:LL_backscattering}
\mathbb{E}_p |K_o \cap [o]| \geq p^3 (k-1)^{n+1} P_p(n) E_p(n)
\end{equation}
holds for every $p\in [0,1]$ and $n\geq 0$. As a corollary, $\beta^*_p\geq 1/2$ for every $p<p_u$.
\end{lemma}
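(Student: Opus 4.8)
The plan is to establish the stated inequality \eqref{eq:LL_backscattering} through an explicit ``down, jump, up'' construction, and then to read off the corollary from the usual dichotomy between a finite expected fiber-intersection and exponential growth of $P_p(n)$.

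For the inequality, fix $n\ge 0$ and let $z$ range over the $(k-1)^{n+1}$ vertices of $T$ at height $-n-1$ lying below $\pi(o)$, writing $y=y(z)$ for the parent of $z$, which lies at height $-n$. First I would explore the cluster of $o$ inside the slab $L_{-n,0}$, revealing only edges both of whose endpoints lie in $L_{-n,0}$; this determines the sets $W_y=\{w\in[y]:o\xleftrightarrow{L_{-n,0}}w\}$ for every height-$(-n)$ vertex $y$ below $\pi(o)$, with $\mathbb{P}_p(W_y\neq\emptyset)=P_p(n)$. Inside $L_{-n,0}$ the lighter started at $o$ cannot leave the depth-$n$ subtree rooted at $\pi(o)$ (it would have to pass above height $0$), so every vertex of $K_o\cap L_{-n,0}$ has all its lamps at heights $\geq -n$; on $\{W_y\neq\emptyset\}$ I fix such a vertex $(y,\hat f)\in W_y$ with $\hat f$ minimal in some prescribed total order on finitely supported configurations, a choice measurable with respect to the exploration. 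The observation that makes everything go through is this: \emph{any} vertex of $G$ whose lamp at $z$ is on lies outside $K_o\cap L_{-n,0}$, because switching that lamp requires the lighter to stand at $z$ (height $-n-1$); moreover every edge of $L_{-n,0}$ incident to such a vertex joins it to another such vertex, so the whole family of these edges is left untouched by the exploration.

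The backscattering step is to traverse, from $(y,\hat f)$, the three edges
\[
(y,\hat f)\;\longrightarrow\;(z,\hat f)\;\longrightarrow\;(z,\hat f+\mathbf 1_z)\;\longrightarrow\;(y,\hat f+\mathbf 1_z),
\]
where $\mathbf 1_z$ denotes the configuration with its only lamp on at $z$. Each of these edges has an endpoint with the lighter at $z$, so it lies outside $L_{-n,0}$ and is open with probability $p$ independently of the exploration; hence we reach $v^\ast:=(y,\hat f+\mathbf 1_z)$ with conditional probability $p^3$. Now $v^\ast$ lies at height $-n$ below $\pi(o)$ and carries the lamp $\mathbf 1_z$, so by the displayed observation its cluster inside $L_{-n,0}$ is a function of a \emph{fresh} family of independent edges; this cluster is again confined to the depth-$n$ subtree below $\pi(o)$, whose only height-$0$ vertex is $\pi(o)$, so the height-$0$ vertices it reaches are exactly the vertices of $[o]$ it reaches, and their expected number is
\[
\mathbb{E}_p X_n^{0,n}=(k-1)^{-n}\mathbb{E}_p X_{-n}^{-n,0}=(k-1)^{-n}\cdot(k-1)^nE_p(n)=E_p(n),
\]
by the tilted mass-transport identity together with the fact that the $(k-1)^n$ height-$(-n)$ fibers below $\pi(o)$ each contribute $E_p(n)$ to $\mathbb{E}_pX_{-n}^{-n,0}$. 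Finally, every vertex $(\pi(o),g)$ of $[o]$ produced from a given $z$ has $g=\hat f+\mathbf 1_z+g'$ with $\hat f,g'$ supported at heights $\geq -n$, so $z$ is the unique support vertex of $g$ of height $<-n$; hence distinct $z$ contribute pairwise distinct vertices of $[o]$. Combining this with the pairwise independence of the downward exploration, the three jump edges, and the upward cluster gives
\[
\mathbb{E}_p|K_o\cap[o]|\;\geq\;\sum_z\mathbb{P}_p(W_{y(z)}\neq\emptyset)\,p^3\,E_p(n)\;=\;p^3(k-1)^{n+1}P_p(n)E_p(n).
\]

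For the corollary, $E_p(n)\geq P_p(n)$ by Markov's inequality, so $\mathbb{E}_p|K_o\cap[o]|\geq p^3(k-1)^{n+1}P_p(n)^2$ for every $n$; if $p<p_u$ then $\mathbb{E}_p|K_o\cap[o]|<\infty$ by \Cref{lm:decay of intersection with subgroup}, so $(k-1)^{n+1}P_p(n)^2$ stays bounded in $n$, and taking logarithms, dividing by $n\log(k-1)$, and letting $n\to\infty$ gives $1-2\beta_p^\ast\leq 0$, i.e.\ $\beta_p^\ast\geq\tfrac12$. The step I expect to be the crux is the independence bookkeeping: checking that the downward exploration, the three jump edges, and the entire upward cluster really do live on pairwise disjoint edge sets (with the latter two genuinely unconditioned), together with the multiplicity-one count of the resulting vertices of $[o]$. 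Both are bought by the single observation that a lamp switched on below height $-n$ certifies membership in a region of $G$ invisible to the exploration; granting that, the rest is linearity of expectation plus the tilted mass-transport principle already recorded in the text.
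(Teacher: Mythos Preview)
Your proof is correct and follows the same ``down, flip, up'' construction as the paper, with the same injectivity argument (the flipped lamp at $z$ uniquely tags the endpoint in $[o]$) and the same derivation of the corollary from \Cref{lm:decay of intersection with subgroup}. The one noteworthy difference is in the justification of the product lower bound: the paper simply invokes the Harris--FKG inequality, whereas you make the stronger observation that the lamp at $z$ partitions the slab $L_{-n,0}$ into two edge-disjoint pieces (lamp-$z$-off versus lamp-$z$-on), so that the downward exploration, the three jump edges, and the upward cluster live on genuinely disjoint edge sets and are therefore independent. This lets you condition on the exploration, pick the minimal $\hat f$, and compute the conditional expectation exactly---a cleaner bookkeeping than a bare FKG appeal, and it sidesteps the issue that the relevant events are not all simultaneously increasing once one fixes a measurable choice of landing point. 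The two arguments are morally the same; yours is just the fully unpacked version.
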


\begin{proof}[Proof of Lemma~\ref{lem:coming_back_up_with_lamps}]
We can form open paths from $o$ to $[o]$ by first connecting to $L_{-n}^*$ inside the slab $L_{-n,0}$, taking one step down the tree, flipping the lamp at that vertex, taking one step back up, then connecting back up to $[o]$ within $L_{-n,0}$. The paths formed by doing this can only end at the same vertex if they started by connecting to the same fibre in $\pi(L_{-n}^*)$, since otherwise they cannot have the same lamp configuration at their endpoint. Thus, the claimed inequality \eqref{eq:LL_backscattering} follows by the Harris-FKG inequality.
If $\beta_p^*<1/2$ then the right hand side of \eqref{eq:LL_backscattering} converges to $\infty$ as $n\to \infty$, so that if $\beta_p^*<1/2$ then $\mathbb{E}_p \left[|K_o \cap [o]|\right]=\infty$. It follows from \Cref{lm:decay of intersection with subgroup} that $\beta_p^*\geq 1/2$ for every $p<p_u$ as claimed.
\end{proof}

We now turn to the product $T\times H$, for which our proof is very different. For each $n\geq 1$, we let $x\in L_{-n}^*$ and define $\mathscr{A}(x)$ to be the event that $o$ is connected to $x$ by an open path in $L_{-n,0}$ that visits $L_{-n}^*$ only at its final point. 
By symmetry, if we let $(v,h)$ be a vertex of $T\times H$ expressed in coordinates, the probability of $\mathscr{A}((v,h))$ does not depend on the choice of $v\in \pi(L_{-n}^*)$, and we denote by $Q_p(n)$ the common probability of the events $\mathscr{A}((v,o_H))$ for $v\in \pi(L_{-n}^*)$ and $o_H$ the root of $H$ (chosen so that $o=(o_T,o_H)$). We have trivially that $Q_p(n)\leq P_p(n)$; our next goal is to prove that these two quantities have the same exponential rate of decay.

\begin{lemma}\label{lm:beta for direct product_flat}
Suppose that $G=T\times H$ with $H$ an amenable transitive graph. Then
\begin{equation}
\label{eq:flat_connections}
Q_p(n)
 = (k-1)^{-\beta_p^* n + o(n)}
\end{equation}
as $n\to \infty$ for each $p\in (0,1]$, where $o_H$ denotes the identity in $H$.
\end{lemma}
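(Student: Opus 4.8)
The plan is to deduce this from an embedded Galton--Watson construction, which is also the construction underlying the lower bound in \Cref{lem:beta_dim}. Since $Q_p(n)\le P_p(n)=(k-1)^{-\beta_p^*n\pm o(n)}$ is immediate, only the matching lower bound $Q_p(n)\ge(k-1)^{-\beta_p^*n-o(n)}$ needs proof. The point of departure is the elementary identity $\mathbb{P}_p(\mathscr{A}((v,h)))=p\,\mathbb{P}_p(o\xleftrightarrow{L_{-(n-1),0}}(\hat v,h))$ for $v\in\pi(L_{-n}^*)$, where $\hat v$ denotes the tree-parent of $v$: a path witnessing $\mathscr{A}((v,h))$ must reach $(v,h)$ along the tree edge descending from $(\hat v,h)$, and the rest of the path lies in $L_{-(n-1),0}$, so the two events depend on disjoint sets of edges. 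Summing over $h$ and applying \Cref{lem:probabilities_and_expectations} gives $\sum_h\mathbb{P}_p(\mathscr{A}((v,h)))=p\,E_p(n-1)=(k-1)^{-\beta_p^*n+o(n)}$ whenever $p<p_u$. Thus the \emph{unpinned} first-passage quantity already decays at the correct rate, and the essential task is to show that a subexponential fraction of this mass is carried by the single landing point $h=o_H$.

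To exploit this I would fix a large block length $N$ and build a Galton--Watson process inside $K_o$ whose individuals are vertices $(v,h)$ with $\pi(v)$ a descendant of $\pi(o)$ at a level of the form $-jN$; the children of an individual at level $-jN$ are the $(v',h')$ with $\pi(v')$ a generation-$N$ descendant of $\pi(v)$ that are joined to $(v,h)$ by an open path whose first step descends one tree edge and which thereafter stays strictly below level $-jN$, visiting level $-(j+1)N$ only at its endpoint. Since these sub-slabs sit over pairwise disjoint subtrees, the explorations attached to distinct individuals lying over distinct tree-vertices are independent; after thinning to one individual per tree-vertex --- which costs only a $(k-1)^{o(N)}$ factor, the number of $h$ over a given tree-vertex being controlled by the exponential tail bound of \Cref{lm:decay of intersection with subgroup} --- this becomes a bona fide branching process. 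By the radial symmetries of $G$ and the identity above its mean offspring number is $(k-1)^{(1-\beta_p^*)N+o(N)}$, hence exceeds $1$ for large $N$ when $p_c<p<p_u$ (the remaining parameters are handled by simpler limiting or comparison arguments and are not needed in the sequel). Standard first- and second-moment estimates for embedded processes of this type, as in \cite{HL00,lalley1998limit}, then show that the process survives with positive probability, that the ends of $T$ lying below its surviving individuals form the limit set of a supercritical Galton--Watson tree of mean offspring $(k-1)^{(1-\beta_p^*)N+o(N)}$ --- whence the Hausdorff-dimension lower bound $1-\beta_p^*$ claimed in \Cref{lem:beta_dim} follows via the Hawkes--Lyons theorem \cite{Hawkes81,lyons1990random} after letting $N\to\infty$ --- and, by tree-transitivity of the group of automorphisms fixing $o$, that every generation-$jN$ descendant $v_0$ of $\pi(o)$ is ``alive'' with probability exactly $(k-1)^{-\beta_p^*jN+o(jN)}$. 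Since being alive entails $\mathscr{A}((v_0,h))$ for some $h$, this yields $\mathbb{P}_p(\bigcup_h\mathscr{A}((v_0,h)))\ge(k-1)^{-\beta_p^*n-o(n)}$.

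Combining this with $\mathbb{P}_p(\bigcup_h\mathscr{A}((v_0,h)))\le\sum_h\mathbb{P}_p(\mathscr{A}((v_0,h)))=(k-1)^{-\beta_p^*n+o(n)}$ shows that the first-passage landing set $\widehat S_n:=\{h\in H:\mathscr{A}((v_0,h))\text{ holds}\}$ is non-empty with probability $(k-1)^{-\beta_p^*n\pm o(n)}$ and satisfies $\mathbb{E}_p|\widehat S_n|=(k-1)^{-\beta_p^*n\pm o(n)}$, so on the event that it is non-empty it typically contains only $(k-1)^{o(n)}$ points. The final step is to show $Q_p(n)=\mathbb{P}_p(\mathscr{A}((v_0,o_H)))\ge(k-1)^{-o(n)}\,\mathbb{P}_p(\bigcup_h\mathscr{A}((v_0,h)))$, which together with the trivial upper bound finishes the proof. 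This is where amenability of $H$ enters essentially: using a F\o lner exhaustion of $H$ together with the uniform bound $\sup_x\mathbb{E}_p|K_x\cap[y]|<\infty$ from \Cref{lm:decay of intersection with subgroup} to rule out long excursions of the connecting path in the $H$-direction, one argues that $\widehat S_n$ cannot escape to infinity inside $H$ --- that most of the mass $\mathbb{E}_p|\widehat S_n|$ is carried within a window around $o_H$ of cardinality $(k-1)^{o(n)}$ --- and transfers the bound on $\mathbb{P}_p(\bigcup_h\mathscr{A}((v_0,h)))$ to the single point $o_H$.

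The main obstacle is precisely this amenability step. It has no analogue in branching random walk, where the corresponding identity (``first-passage radius $=$ Green-function radius'') is forced by the renewal relation $G=F\cdot G(o,o)$; percolation admits no such exact decomposition, so one must instead argue quantitatively that a first-passage connection reaching a deep fibre $[v_0]$ somewhere can, at a subexponential cost, be made to land at the base point $o_H$. A secondary, more routine, difficulty is the bookkeeping that makes the embedded Galton--Watson coupling a genuinely independent branching process: one must reveal the configuration block by block in an order that keeps the sub-slab explorations of distinct individuals independent even when they lie over the same tree-vertex, and check that the thinning this requires does not destroy supercriticality.
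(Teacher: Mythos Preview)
Your proposal has a genuine gap at precisely the point you flag as the main obstacle. You have correctly reduced the problem to showing that $Q_p(n)=\mathbb{P}_p(\mathscr{A}((v_0,o_H)))$ is within a subexponential factor of $\mathbb{P}_p\bigl(\bigcup_h\mathscr{A}((v_0,h))\bigr)$, but the justification you offer---a F\o lner exhaustion together with the fiber-intersection bound of \Cref{lm:decay of intersection with subgroup}---does not deliver this. The intersection bound controls the \emph{cardinality} of $K_o\cap[x]$, not the \emph{location} of those points within the fiber; nothing in it prevents the first-passage landing set $\widehat S_n$ from consisting of a single point that has drifted a distance comparable to $n$ away from $o_H$. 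Even granting that $\widehat S_n$ is typically small and that its expected mass lies in a subexponential window $F_n\subset H$, pigeonhole only gives you a single $h\in F_n$ with $\mathbb{P}_p(\mathscr{A}((v_0,h)))\ge (k-1)^{-\beta_p^*n-o(n)}$, and there is no symmetry forcing $h=o_H$. A secondary issue is that your argument invokes \Cref{lem:probabilities_and_expectations} and \Cref{lm:decay of intersection with subgroup}, both of which require $p<p_u$, whereas the lemma is stated for all $p\in(0,1]$.

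The paper's proof handles the amenability step quite differently and avoids these problems. Rather than arguing about the landing distribution directly, it projects the embedded branching process $(Z_i)$ onto $H$, observes that $(\pi_H(Z_i))$ is a genuine branching random walk on $H$ with some step distribution $P$, and then invokes \emph{Kesten's theorem}: since $H$ is amenable, $P$ has spectral radius $1$, so $P^{2r}(o_H,o_H)=e^{-o(r)}$. This gives $\mathbb{E}|Z_{2r}\cap\pi_H^{-1}(o_H)|=(\mathbb{E}|Z_1|)^{2r-o(r)}$, and a second branching process $W_\ell$ built from returns to the base point $o_H$ at times $2r,4r,\ldots$ then yields the lower bound on $Q_p(2\ell rN)$ directly. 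The point is that the return-to-origin probability for a random walk on an amenable group is automatically subexponential, which is exactly the ``pinning'' statement you need; your F\o lner sketch is gesturing at the same fact but does not supply the mechanism.
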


(Note that while $Q_p(n)$ is supermultiplicative and therefore has a well-defined rate of exponential decay by Fekete's lemma, the proof will not make use of this fact.)

\begin{proof}[Proof of \Cref{lm:beta for direct product_flat}]
It suffices to prove the lower bound, with the upper bound following immediately from the definitions.
Fix $p\in (0,1]$ and $\beta_1 > \beta_p^*$. If $N$ is sufficiently large then
\begin{equation*}
P_{p}(N-1)>\frac{1}{p} (k-1)^{-\beta_1 (N-1)}.
\end{equation*}
Fix one such $N$, and note that if $x \in L_{-N}^*$ and $\mathscr{B}(x)$ denotes the event that $o$ is connected to $[x]$ via an open path in $L_{-N,0}$ that intersects $L_{-N}^*$ only at its last vertex, then 
\[
\mathbb{P}_p(\mathscr{B}(x)) \geq p P_{p}(N-1) > (k-1)^{-\beta_1 (N-1)}.
\]
  Given two vertices $x,y$ with $h(x,y)=-N$, we also define $\mathscr{A}(x,y)$ to be the event that there exists an open path from $x$ to $y$ in $L_{-N,0}(x)$ that visits $L_{-N}^*$ only at its final point.

We will now use the events $\mathscr{A}$ to couple percolation on $G$ with a certain branching random walk on $H$. 
Let $Z_0=\{o\}$ and define $Z_1$ as follows: For each vertex $v$ of $T$ such that $\mathscr{B}(x)$ holds for some (and hence every) $x\in \pi^{-1}(v)$, pick an element from the set $\{x\in \pi^{-1}(v): \mathscr{A}(x)$ holds$\}$ uniformly at random and include this element in $Z_1$. Thus, $Z_1$ contains exactly one point from each fiber $N$ levels below $o$ that is connected to $o$ in the manner required by the events $\mathscr{A}$. Inductively, we define $Z_k$ for $k\geq 2$ by, for each element $x\in Z_{k-1}$ and every vertex $v$ of $T$ such that  $\mathscr{A}(x,y)$ holds for some $y\in \pi^{-1}(v)$, picking an element from the set $\{y\in \pi^{-1}(v): \mathscr{A}(x,y)$ holds$\}$ uniformly at random and including this element in $Z_k$. The sequence of sets $Z=(Z_i)_{i\geq 0}$ is defined so that $(|Z_i|)_{i\geq 0}$ is a branching process; in particular, given some $x\in Z_i$, the part of the process $Z$ that describes the descendants of $x$ is conditionally independent given $(Z_0,\ldots,Z_i)$ from the parts of the process that describe the descendants of the other points in $Z_i$. (Indeed, these different parts of the process depend on the percolation configurations in disjoint subgraphs of $T\times H$.)

Next, observe that if $\pi_H$ denotes the projection $T\times H \to H$ then $(\pi_H(Z_i))_{i\geq 0}$ evolves as a branching random walk on $H$. (In this branching random walk the steps taken by the offspring of a particle are not necessarily independent of the total number of offspring of a particle, since both depend on the percolation configuration restricted to an appropriate slab. This does not cause any problems.) Letting $P$ be the transition kernel on $H$ defined by
\[P(h)=\frac{\mathbb{E} |Z_1 \cap \pi_H^{-1}(h)|}{\mathbb{E}|Z_1|},\]
and letting $P(h_1,h_2)=P(h_1^{-1}h_2)$ denote the associated transition matrix, we have by induction that
\[
\mathbb{E} |Z_r \cap \pi^{-1}(h)| = \sum_{g\in H}\mathbb{E}|Z_{r-1}\cap \pi^{-1}(g)| \mathbb{E}|Z_{1}\cap \pi^{-1}(g^{-1}h)| = (\mathbb{E}|Z_1|)^r P^r(o_H,h)
\]
for each $h\in H$ and $r\geq 0$. Since $H$ is amenable, it follows by Kesten's theorem \cite{kesten1959full} that $P^{2r}(o_H,o_H)=e^{-o(r)}$ as $r\to \infty$, so that
\[
\mathbb{E} |Z_{2r} \cap \pi^{-1}(o_H)| = (\mathbb{E}|Z_1|)^{2r+o(r)}
\]
as $r\to \infty$. Since $\mathbb{E}|Z_1| = (k-1)^N\mathbb{P}_p(\mathscr{B}(x)) > (k-1)^{(1-\beta_1) N}$, where $x\in L_{-N}^*$, it follows that if we fix $\beta_1 < \beta_2$ then 
\[
\mathbb{E} |Z_{2r} \cap \pi^{-1}(o_H)| \geq (k-1)^{(1-\beta_2)2rN}
\]
for all sufficiently large $r$. Fix one such $r$. We can define another branching process by setting $W_0=\{o\}$, $W_1=Z_{2r}\cap \pi^{-1}(o_H)$ and inductively setting $W_\ell$ to be the set of points in $Z_{2\ell r}$ that belong to $\pi^{-1}(o_H)$ and whose ancestor in $Z_{2(\ell-1)r}$ belonged to $W_{\ell-1}$. Since $(|W_\ell|)_{\ell\geq 0}$ is a branching process, we have that
\[
(k-1)^{2\ell r N} Q_p(2\ell rN) \geq \mathbb{E}|W_\ell| = (\mathbb{E}|W_1|)^\ell \geq (k-1)^{(1-\beta_2) 2\ell r N}
\]
for every $\ell \geq 0$. A lower bound of the same form follows for all $Q_p(n)$ since $Q_p(n+1)\geq pQ_p(n)$ for every $n\geq 1$, and the claimed exponential decay rate of $Q_p(n)$ follows since $\beta_2>\beta_1>\beta_p^*$ were chosen arbitrarily.
\end{proof}

\begin{lemma}[Backscattering for products with trees]\label{lm:beta for direct product}
 If $G=T\times H$ with $H$ an amenable transitive graph then 
 $\beta
_{p}^{\ast }\geq1/2$ for every $p<p_u$. 
\end{lemma}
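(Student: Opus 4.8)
\textbf{Proof proposal for \Cref{lm:beta for direct product}.}
The plan is to mimic the lamplighter backscattering argument of \Cref{lem:coming_back_up_with_lamps}, but replacing the lamp configuration (which was used to guarantee that paths reaching $[o]$ via different fibers of $L_{-n}^*$ have distinct endpoints) with the $H$-coordinate, using \Cref{lm:beta for direct product_flat} to ensure that the ``flat'' descent-and-return strategy is not exponentially more costly than the unconstrained one. Concretely, fix $n\geq 1$ and consider, for each vertex $v\in\pi(L_{-n}^*)$, the strategy of connecting $o=(o_T,o_H)$ down to the vertex $x_v:=(v,o_H)$ inside the slab $L_{-n,0}$ via a path that is ``flat'' in the sense required by the event $\mathscr{A}(x_v)$ (so it has probability $Q_p(n)$), then taking one tree step down and immediately back up to return to level $-n$, and then climbing back up from a flat path to some vertex of $[o]$. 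By the Harris-FKG inequality the probability of realising such a configuration through a given $v$ is at least $p^2 Q_p(n) E_p(n)/(\text{something})$; the point is to choose the geometry so that paths routed through distinct $v$ land at distinct vertices of $[o]$, so that their contributions to $\mathbb{E}_p|K_o\cap[o]|$ add.

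The mechanism forcing distinct endpoints is the $H$-coordinate: since the descent path to $x_v$ and the re-ascent path are both flat, and they are joined by a single down-up tree excursion (which does not move in $H$), the $H$-coordinate of the endpoint in $[o]$ is pinned near $o_H$ only if all the intermediate paths are flat. To actually separate the $v$'s one instead tracks the tree location: the endpoint of the re-ascent lies in $[o]$, i.e.\ at tree-level $0$, but if the return path is the mirror image of a flat descent path then it arrives back at $o_T$ — which is bad, since then all endpoints coincide. The fix, exactly as in the branching-random-walk backscattering literature \cite{lalley1997hyperbolic,HL00}, is to only demand flatness for a constant fraction and to use the branching structure: route through a vertex $x\in L_{-n}^*$, descend one further level to $L_{-n-1}^*$ (this is where the factor $(k-1)$ and the extra $p$ come from), and then re-ascend flatly; the re-ascent lands at a level-$0$ vertex whose tree coordinate is determined by which child was chosen, and summing over the $(k-1)^{n+1}$ choices of $y\in L_{-n-1}^*(o)$ — each giving a distinct tree location for the top of its own re-ascent, hence a distinct fiber, hence distinct endpoint — yields
\[
\mathbb{E}_p|K_o\cap[o]| \;\geq\; p^3\,(k-1)^{n+1}\,Q_p(n)\,E_p(n).
\]
Combined with $Q_p(n)=(k-1)^{-\beta_p^*n+o(n)}$ from \Cref{lm:beta for direct product_flat}, $E_p(n)\geq P_p(n)=(k-1)^{-\beta_p^*n\pm o(n)}$, and the finiteness $\mathbb{E}_p|K_o\cap[o]|<\infty$ for $p<p_u$ from \Cref{lm:decay of intersection with subgroup}, we get $1-2\beta_p^*\leq 0$, i.e.\ $\beta_p^*\geq 1/2$.

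The main obstacle is the endpoint-separation bookkeeping: in the lamplighter case distinctness of endpoints was automatic from the lamp at the bottom vertex, whereas here one must genuinely exploit that different re-ascent routes terminate in different fibers $[z]$ at level $0$, and one must make sure that the ``flat descent $+$ down-up $+$ flat re-ascent'' events through the different $y\in L_{-n-1}^*(o)$ are built from percolation in regions that overlap only in already-revealed edges, so that Harris-FKG (or a direct second-moment-free union over disjoint regions) applies cleanly. A secondary technical point is that flatness is only controlled in expectation via $Q_p(n)$ and $E_p(n)$, not with uniform constants, so the inequality must be phrased as an expectation bound (as in \eqref{eq:LL_backscattering}) rather than a pathwise statement; this is harmless since we only need it to diverge as $n\to\infty$ when $\beta_p^*<1/2$. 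I would write the proof by first setting up the routing through $L_{-n-1}^*(o)$, then checking the disjoint-regions / FKG input, then reading off the exponent from \Cref{lm:beta for direct product_flat} and concluding via \Cref{lm:decay of intersection with subgroup}.
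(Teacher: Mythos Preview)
Your proposed endpoint-separation mechanism does not work for $T\times H$. You claim that after a flat descent to $x\in L_{-n}^*$, a single down-up tree step, and a flat re-ascent, ``the re-ascent lands at a level-$0$ vertex whose tree coordinate is determined by which child was chosen''. But $[o]=\{o_T\}\times H$, and any path in the slab $L_{-n-1,0}$ starting from a vertex whose tree coordinate is a descendant of $o_T$ can only reach level $0$ at tree coordinate $o_T$, since going up in the tree is deterministic. Thus all $(k-1)^{n+1}$ routes land in the \emph{same} fiber $[o]$, and by the radial symmetry of the construction they land at the same \emph{distribution} of $H$-coordinates; nothing forces the endpoints to be distinct. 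In the lamplighter case the flipped lamp at level $-n-1$ is genuinely recorded in the endpoint's lamp configuration, giving automatic injectivity; for $T\times H$ there is no analogous record, and the displayed inequality $\mathbb{E}_p|K_o\cap[o]|\geq p^3(k-1)^{n+1}Q_p(n)E_p(n)$ is not justified (and, as stated, cannot be correct: the right-hand side grows like $(k-1)^{(1-2\beta_p^*)n}$ for \emph{every} $p$, which would force $\beta_p^*\geq 1/2$ uniformly and in particular at $p>p_u$ where $\beta_p^*=0$).

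The paper's proof avoids this obstacle by abandoning the attempt to lower-bound $\mathbb{E}_p|K_o\cap[o]|$ via backscattering paths. Instead, for any two target $H$-coordinates $h,g$, it couples a supercritical branching process on $\pi(L_{-r}^*)$ in which a tree vertex $v$ is occupied if \emph{both} flat events $\mathscr{A}((u,h),(v,h))$ and $\mathscr{A}((u,g),(v,g))$ occur from its parent $u$. By \Cref{lm:beta for direct product_flat} and FKG this has offspring mean $\geq (k-1)^{(1-2\beta)r}>1$ when $\beta_p^*<1/2$, and its survival probability is bounded below uniformly in $h,g$ since $|Y_1^{h,g}|\leq (k-1)^r$. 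On survival the clusters of $(o_T,h)$ and $(o_T,g)$ come within distance $d(h,g)$ infinitely often, hence merge at any $p'>p$, so $\inf_{h,g}\mathbb{P}_{p'}((o_T,h)\leftrightarrow(o_T,g))>0$ and $p\geq p_u$ follows from \Cref{lem:p_u_fiber_def} via Fatou. The key conceptual difference is that the paper does not try to produce many distinct points of $K_o\cap[o]$; it instead shows that \emph{any} two prescribed points of $[o]$ are connected with probability bounded away from zero.
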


\begin{proof}[Proof of \Cref{lm:beta for direct product}]
We will assume that $\beta_p^*<1/2$ and prove that $p\geq p_u$.
Let $o_T$ be the root of $T$, let $h$ and $g$ be two vertices of $H$, and let $\beta_p^*<\beta<1/2$ be fixed. By \Cref{lm:beta for direct product_flat} and the Harris-FKG inequality, we may fix $r\geq 1$ sufficiently large that
\begin{multline*}
\mathbb{P}_p(\mathscr{A}((o_T,h),(v,h)) \cap \mathscr{A}((o_T,g),(v,g))
\geq \mathbb{P}_p(\mathscr{A}((o_T,h),(v,h)))\mathbb{P}_p(\mathscr{A}((o_T,g),(v,g))
\\\geq (k-1)^{-2\beta r} > (k-1)^{-r}
\end{multline*}
for every $v\in \pi(L_{-r}^*)$. Note that this constant $r$ can be taken independently of the choice of $h,g$.
 We now define yet another embedded branching process $Y^{h,g}=(Y^{h,g}_i)_{i\geq 0}$ by setting $Y^{h,g}_0=\{o_T\}$ and for each $i\geq 0$ letting $Y^{h,g}_{i+1}$ be the set of vertices $v$ in $\pi(L_{-(i+1)r}^*)$ for which there exists a point $u \in Y^{h,g}_i$ such that the event
$\mathscr{A}((u,h),(v,h)) \cap \mathscr{A}((u,g),(v,g))$ holds. This sequence has the property that $(|Y^{h,g}_i|)_{i\geq 0}$ is a branching process in the usual sense. Since 
\[
\mathbb{E}|Y^{h,g}_1| = (k-1)^r\mathbb{P}_p(\mathscr{A}((o_T,h),(v,h)) \cap \mathscr{A}((o_T,g),(v,g)) > 1,
\]
this branching process is supercritical, so that it survives forever with positive probability. Moreover, since $|Y_1^{h,g}|$ is bounded above by the constant $(k-1)^{r}$, the probability that $Y^{h,g}$ survives forever is bounded away from zero uniformly in the choice of $h$ and $g$; this follows from the characterisation of the extinction probability as the smallest positive fixed point of the probability generating function \cite[Proposition 5.4]{LP:book} together with the fact that the second derivative of the probability function can be bounded in terms of the supremum of the support of the relevant random variable.

Now observe that if $Y^{h,g}$ survives forever then the clusters of $(o_T,h)$ and $(o_T,g)$ must either be equal or come within distance $d(h,g)$ of each other infinitely often. In either case, the two clusters will become a.s.\ equal when we increase $p$ to any $p'>p$ in the standard monotone coupling of percolation at different values of $p$, so that
\[
\inf_{h,g} \mathbb{P}_{p'}((o_T,h) \leftrightarrow (o_T,g)) \geq \inf_{h,g} \mathbb{P}_p(Y^{h,g} \text{ survives forever})>0
\]
for any $p'>p$. It follows by Fatou's lemma that there is a cluster having infinite intersection with $[o]$ with positive probability for every $p'>p$ and hence by \Cref{lem:p_u_fiber_def} that
 $p\geq p_u$ as claimed.
\end{proof}

Together, \Cref{lem:beta_dim,lem:coming_back_up_with_lamps,lm:beta for direct product} establish all of \Cref{thm:dimension_jump} except for the claim that the dimension depends continuously on $p$ below $p_u$. This last part of the theorem will be established at the end of the paper after we prove that $p_u=p_t=p_{2\to 2}=p_h$ in the next section.

\subsection{The Hammersley-Welsh argument}
In this section we prove that $p_u=p_t$ and hence that $p_u=p_t=p_{2\to2}=p_h$, completing the proof of \Cref{thm:lamplighter_on_tree_p22,thm:lamplighter_on_tree}. More concretely, our goal is to prove that the tilted susceptibility $\chi_{p,1/2}$ is finite whenever $\beta_p^*>1/2$, which will yield the claim in conjunction with \Cref{lem:coming_back_up_with_lamps,lm:beta for direct product}. As mentioned in the introduction, the proof will be based on the Hammersley-Welsh argument \cite{MR0139535} usually used to relate generating functions for self-avoiding walks and self-avoiding bridges, and in particular
the nonunimodular version of the Hammersley-Welsh argument developed in \cite{1709.10515}.

We begin by introducing some more notation. We define
\[
D_p(n) := \mathbf{E}_p X_{-n}^{-n,0} = (k-1)^n E_p(n),
\]
where the relation with $E_p(n)$ follows since $L_{-n}^*$ is equal to the set of vertices $x\in L_{-n}$ such that $[x]$ is connected to the origin in by a (not necessarily open) path in $L_{-n,0}$, and define
\[
U_p(n) := \mathbf{E}_p X_n^{0,n} = (k-1)^{-n} D_p(n) = E_p(n),
\]
where the second equality follows from the tilted mass-transport principle. (Although the quantities $U_p(n)$ and $E_p(n)$ are equal, we prefer to give them different names corresponding to their two different meanings in the model.)
The following key lemma, which encapsulates the Hammersley-Welsh method, relates the slab quantities $D_p(n)$ to the global quantity $\chi_{p,1/2}$.

\begin{lemma}
\label{lem:Hammersley-Welsh}
The tilted susceptibility satisfies
\[\chi_{p,1/2} \leq (\mathbb{E}_p|K_o\cap [o]|)^2 \exp\left[ 2\sum_{n=0}^\infty (k-1)^{-n/2} D_p(n) \right]\]
for every $p\in [0,1]$.
\end{lemma}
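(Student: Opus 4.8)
The plan is to run the nonunimodular Hammersley--Welsh argument of \cite{1709.10515}, adapted to the present setting where ``bridges'' are replaced by the slab-confined connection events encoded by $D_p(n)$. First I would set up the decomposition of a general connection contributing to $\chi_{p,1/2}$ according to its extremal heights. Recall that $\chi_{p,1/2} = \sum_{l\in\mathbb Z} (k-1)^{l/2}\mathbb E_p X_l^{-\infty,\infty}$, so it suffices to control, for each terminal height $l$, the expected number of vertices at level $l$ connected to $o$. Given an open connection from $o$ to a vertex $x$ at height $l$, one records the running minimum and maximum heights attained along a suitably chosen exploration path; the portion of the connection between consecutive extremal-height records is a connection that stays on one side of a half-space (i.e.\ lives in a slab of the form $L_{-n,0}$ or $L_{0,n}$ relative to an appropriate basepoint), and hence its expected multiplicity is governed by $D_p(\cdot)$ or $U_p(\cdot)$. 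The combinatorial heart of Hammersley--Welsh is that the number of such alternations is small on average because each descent below the previous minimum ``costs'' volume: concatenating the slab pieces and summing over the record heights produces a convolution of the generating functions $\sum_n z^n D_p(n)$ and $\sum_n z^n U_p(n)$ evaluated at $z=(k-1)^{-1/2}$, together with the $(k-1)^{l/2}$ tilt which is exactly what makes the down-pieces and up-pieces enter symmetrically.

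Concretely, the key algebraic step is the sub-multiplicative/sub-convolutive inequality: if $b_p(n)$ denotes (a generating-function proxy for) the expected number of level-$(-n)$ vertices reachable from $o$ by a ``bridge'' that attains a new minimum at each step-block, then one shows $b_p(n) \le$ (something like) $\sum_{n_1+\dots+n_j = n} \prod_i D_p(n_i)$-type bounds controlled by $\exp[\sum_n (k-1)^{-n/2}D_p(n)]$, and then a general connection is at most an up-piece, a bridge (in the tilted sense), and a down-piece glued together, contributing the two factors of $\mathbb E_p|K_o\cap[o]|$ (one for each ``flat'' endpoint intersection with a fiber) times the bridge generating function. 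I would import the formal statement of this glueing inequality from \cite{1709.10515}, checking that the hypotheses there — essentially the tilted mass-transport identities $U_p(n)=(k-1)^{-n}D_p(n)=E_p(n)$ already recorded in the excerpt, plus the slab structure — are met by $G$ and $\Gamma$. The factor $(\mathbb E_p|K_o\cap[o]|)^2$ then arises because the initial and final fibers can be explored in full (each fiber-intersection has expectation $\mathbb E_p|K_o\cap[o]|$, finite for $p<p_u$ by \Cref{lm:decay of intersection with subgroup} but the inequality is stated for all $p$ with both sides possibly infinite), and the exponential factor is the standard Hammersley--Welsh bound $\prod_n(1 + \text{small}) \le \exp[\sum_n \dots]$ on the bridge generating function.

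I expect the main obstacle to be the careful bookkeeping of the exploration/decomposition in the nonunimodular, partially-oriented setting: one must choose the exploration of $K_o$ so that the ``record'' decomposition is measurable and the pieces genuinely live in disjoint (or at least conditionally-independent-after-FKG) slabs, so that the expectations multiply, while simultaneously tracking the height tilt $\Delta(o,x)^{1/2}=(k-1)^{l/2}$ correctly through each piece. A secondary technical point is justifying that the relevant generating functions can be manipulated even when individual terms are infinite (for $p\ge p_t$), which is handled by the usual monotone-limit / truncation argument, or simply by noting the inequality is vacuous unless $\sum_n (k-1)^{-n/2}D_p(n)<\infty$ and $\mathbb E_p|K_o\cap[o]|<\infty$. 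Once the glueing inequality is in place, the conclusion $\chi_{p,1/2}\le (\mathbb E_p|K_o\cap[o]|)^2\exp[2\sum_n (k-1)^{-n/2}D_p(n)]$ follows by summing the geometric-type series over the number of records, which contributes the factor $2$ in the exponent (one copy of $\sum_n(k-1)^{-n/2}D_p(n)$ for the minima-records and one for the maxima-records).
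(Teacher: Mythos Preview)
Your high-level plan---decompose open paths by the heights of their successive extrema, use the BK inequality so that expected contributions of the pieces multiply, then bound the resulting sum via $\prod_i(1+f(i))\le\exp[\sum_i f(i)]$---is exactly the paper's approach. But your account of where the factor $2$ in the exponent and the square on $\mathbb E_p|K_o\cap[o]|$ come from is wrong, and as written this is a gap.

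In the Hammersley--Welsh decomposition of a \emph{half-space} path, the successive absolute height differences $|h(\gamma(\sigma_{j+1}))-h(\gamma(\sigma_j))|$ form a \emph{single} strictly decreasing sequence in which down-pieces (contributing $D_p$) and up-pieces (contributing $U_p$) alternate. After rewriting $D_p(s)U_p(s)=((k-1)^{-s/2}D_p(s))^2$ and absorbing the tilt, this yields \emph{one} factor of $\exp\bigl[\sum_n(k-1)^{-n/2}D_p(n)\bigr]$, not two. Your claim that ``minima-records'' and ``maxima-records'' each contribute a separate copy is not how the combinatorics works: the two kinds of record are interleaved in one strictly decreasing sequence, and trying to separate them destroys the strict monotonicity that makes the $\prod(1+f)$ bound go through.

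The paper obtains the $2$ by first proving a half-space reduction: splitting a full-space connection at its global maximum height (via BK and the tilted mass-transport principle) gives $\chi_{p,\lambda}\le\mathcal H_{p,\lambda}\mathcal H_{p,1-\lambda}$ where $\mathcal H_{p,\lambda}=\sum_{n\ge0}(k-1)^{-\lambda n}\mathbb E_pX_{-n}^{-\infty,0}$, hence $\chi_{p,1/2}\le\mathcal H_{p,1/2}^2$. The Hammersley--Welsh argument is then run \emph{once} on $\mathcal H_{p,1/2}$, yielding $\mathcal H_{p,1/2}\le\mathbb E_p|K_o\cap[o]|\cdot\exp\bigl[\sum_n(k-1)^{-n/2}D_p(n)\bigr]$; here the prefactor $\mathbb E_p|K_o\cap[o]|$ is the contribution of the stretch from $o$ to the path's last visit to height $0$, which is forced to lie in $[o]$ since the lower half-space contains only one height-$0$ tree vertex in the component of $o$. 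Squaring produces both the $2$ and the $(\mathbb E_p|K_o\cap[o]|)^2$ at once---so your ``initial and final fiber'' picture is not quite right either. Finally, the tool for making the pieces' expectations multiply is disjoint occurrence (BK), not FKG; FKG points the wrong way here.
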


Before proving this lemma, let us see how it implies \Cref{thm:lamplighter_on_tree}.

\begin{proof}[Proof of \Cref{thm:lamplighter_on_tree}]
Since the inequalities $p_t\leq p_h,p_{2\to 2}\leq p_u$ always hold, it suffices to prove that $p_u \leq p_t$, or equivalently that $\chi_{p,1/2}<\infty$ for every $p<p_u$.
 Fix $p<p_u$. It follows from Lemma \ref{lm:decay of intersection with subgroup} that $\mathbb{E}_p|K_o\cap [o]|<\infty$, while \Cref{lem:coming_back_up_with_lamps,lm:beta for direct product} imply that $\beta_p^*>1/2$. Since $D_p(n)=(k-1)^n E_p(n)= (k-1)^{(1-\beta_p^*) n \pm o(n)}$ by Lemma~\ref{lem:probabilities_and_expectations}, it follows immediately from Lemma~\ref{lem:Hammersley-Welsh} that  $\chi_{p,1/2}<\infty$ as claimed.
\end{proof}

In order to prove Lemma~\ref{lem:Hammersley-Welsh}, we first prove the following lemma relating the tilted susceptibility to a half-space version of itself.

\begin{lemma}
\label{lem:half_space_to_full_space}
The generating function defined by
\[
\mathcal{H}_{p,\lambda} :=  \sum_{n=0}^\infty (k-1)^{-\lambda n} \mathbb{E}\left[ X_{-n}^{-\infty,0}\right]
\]
satisfies the inequality
$\chi_{p,\lambda} \leq \mathcal{H}_{p,\lambda} \mathcal{H}_{p,1-\lambda}$
for every $p\in [0,1]$ and $\lambda \in \mathbb{R}$.
\end{lemma}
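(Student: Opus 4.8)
The plan is to prove the factorization $\chi_{p,\lambda}\le \mathcal H_{p,\lambda}\mathcal H_{p,1-\lambda}$ by splitting every connected pair $o\leftrightarrow x$ according to the minimal level reached by the open path realizing the connection. Concretely, for a vertex $x$ with $h(o,x)=m$, an open path from $o$ to $x$ must reach some minimal level $-n$ with $n\ge 0$ and $n\ge -m$ (i.e.\ $-n\le \min\{0,m\}$); at that level it passes through some vertex $y\in L_{-n}$. The two halves of the path — the portion from $o$ down to $y$, staying in levels $\le 0$ on one side and the portion from $y$ up to $x$, staying in levels $\le 0$ relative to $y$ — are each constrained to half-spaces, which is exactly what the quantities $X_{-n}^{-\infty,0}$ count (in expectation). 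This is the percolation analogue of decomposing a self-avoiding walk into two bridges at its lowest point, which is the heart of the Hammersley–Welsh method.

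The key steps, in order, would be: (1) Write $\chi_{p,\lambda}=\sum_x \mathbb P_p(o\leftrightarrow x)\Delta(o,x)^\lambda = \sum_{m\in\mathbb Z}(k-1)^{\lambda m}\mathbb E_p X_m^{-\infty,\infty}$ and, inside $X_m^{-\infty,\infty}$, sum over the minimal level $-n$ attained; this gives a bound $\mathbb E_p X_m^{-\infty,\infty} \le \sum_{n\ge\max\{0,-m\}} \mathbb E_p\bigl[(\text{number of }y\in L_{-n}\text{ with }o\xleftrightarrow{L_{-n,\infty}}y)\times(\text{number of }x\in L_m\text{ with }y\xleftrightarrow{L_{-n,\infty}}x)\bigr]$, where the union bound over the ``pivotal'' vertex $y$ on the minimal level is what produces an inequality rather than an equality. (2) Apply the BK-type / Harris–FKG split — more precisely a union bound over $y$ together with the fact that the downward half-path from $o$ to $y$ and the upward half-path from $y$ to $x$ can be taken to live in the disjoint slabs $L_{-n,0}$-translated appropriately, so conditionally the counts decouple — to bound this by $\sum_{n} \mathbb E_p[X_{-n}^{-\infty,0}]\cdot \mathbb E_p[X_{m+n}^{-\infty,0}(\text{from a point of }L_{-n})]$; by transitivity the second factor equals $\mathbb E_p X_{m+n}^{-\infty,0}$ evaluated at the origin. (3) Substitute this into the sum, reindex with $a=n$, $b=m+n$ so that $m=b-a$ and $(k-1)^{\lambda m}=(k-1)^{\lambda b}(k-1)^{-\lambda a}=(k-1)^{(1-\lambda)(-a)+\lambda a}\cdot$ — here one uses $\chi_{p,\lambda}=\chi_{p,1-\lambda}$ and the symmetry of the tilt to match exponents — and the double sum factors as $\Bigl(\sum_a (k-1)^{-(1-\lambda)a}\mathbb E_p X_{-a}^{-\infty,0}\Bigr)\Bigl(\sum_b (k-1)^{-\lambda b}\mathbb E_p X_{-b}^{-\infty,0}\Bigr) = \mathcal H_{p,1-\lambda}\mathcal H_{p,\lambda}$, after using the tilted mass-transport principle $\mathbb E_p X_{b}^{-\infty,0}=(k-1)^{-?}\mathbb E_p X_{-b}^{-\infty,?}$ to convert upward half-space counts to downward ones (this is where the modular factor $(k-1)^{\pm n}$ gets absorbed into the $\lambda$ vs $1-\lambda$ bookkeeping, exactly as in the nonunimodular Hammersley–Welsh argument of \cite{1709.10515}).

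The main obstacle I expect is making the ``split at the minimal level'' step rigorous: one must verify that conditioning on the open path achieving its minimum at level $-n$ through a chosen vertex $y$ genuinely separates the configuration into two pieces living in disjoint (hence independent) slabs, and that the union bound over all candidate minimal vertices $y$ does not overcount in a way that breaks the inequality. The cleanest route is probably to fix, via a deterministic rule, the \emph{last} vertex on the lowest level visited by an open path (or to use a BK-inequality-style disjoint-occurrence bound) so that the two half-space connection events are witnessed by edge-disjoint open paths; then the expected product is bounded by the product of expectations of the half-space counts $X_{-n}^{-\infty,0}$ and $X_{m+n}^{-\infty,0}$. A secondary bookkeeping nuisance is keeping the $\lambda$/$(1-\lambda)$ exponents straight through the mass-transport conversion and the reindexing, but this is routine once the structural decomposition is in place; the symmetry $\chi_{p,\lambda}=\chi_{p,1-\lambda}$ noted earlier in the excerpt guarantees the two factors come out as $\mathcal H_{p,\lambda}$ and $\mathcal H_{p,1-\lambda}$ rather than some asymmetric pair.
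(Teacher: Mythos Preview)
Your overall strategy---decompose the full-space connection at an extremal level, apply the BK inequality to factor the expectation, then use the tilted mass-transport principle and reindex---is exactly the paper's approach. However, there is a genuine direction error: you split at the \emph{minimal} level $-n$, but the formulas you write in step~(2) involve $X_{-n}^{-\infty,0}$, which is inconsistent with that split. If the path achieves its minimum at level $-n$, then both the $o$-to-$y$ piece and the $y$-to-$x$ piece are constrained to the half-space $L_{-n,\infty}$ (above the minimum), giving factors $\mathbb E_p X_{-n}^{-n,\infty}$ and $\mathbb E_p X_{m+n}^{0,\infty}$. These are \emph{upper} half-space counts, not the lower half-space counts $X^{-\infty,0}_\cdot$ that define $\mathcal H_{p,\lambda}$, and there is no automorphism of $G$ fixing $o$ that swaps $L_{-\infty,0}$ with $L_{0,\infty}$ (the level sets $L_n$ and $L_{-n}$ are not even isomorphic as rooted sets). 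The paper instead splits at the \emph{maximal} level $l$: the piece from $o$ up to $z\in L_l$ lives in $L_{-\infty,l}$, contributing $\mathbb E_p X_l^{-\infty,l}=(k-1)^{-l}\mathbb E_p X_{-l}^{-\infty,0}$ by tilted mass-transport, and the piece from $z$ down to $x$ contributes $\mathbb E_p X_{n-l}^{-\infty,0}$ directly. This yields the stated $\mathcal H_{p,\lambda}\mathcal H_{p,1-\lambda}$ after multiplying by $(k-1)^{-\lambda n}$ and summing; no appeal to the symmetry $\chi_{p,\lambda}=\chi_{p,1-\lambda}$ is needed.

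A smaller point: your claim that the two half-paths ``live in disjoint slabs'' is incorrect in either decomposition---both pieces live in the \emph{same} half-space (the one bounded by the extremal level). The factorization comes entirely from the BK inequality applied to the disjoint-occurrence event, not from spatial disjointness of the slabs.
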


\begin{proof}[Proof of Lemma~\ref{lem:half_space_to_full_space}] Taking a union bound over the maximal height reached by an open path and using the BK inequality yields that
\[
\mathbb{E}_p X_{n}^{-\infty,\infty} \leq \sum_{l=n\vee 0}^\infty \mathbb{E}_p X_{l}^{-\infty,l} \mathbb{E}_p X_{n-l}^{-\infty,0} = \sum_{l=n\vee 0}^\infty (d-1)^{-l} \mathbb{E}_p X_{-l}^{-\infty,0} \mathbb{E}_p X_{n-l}^{-\infty,0}
\]
for every $n \in \mathbb{Z}$, where the equality follows from the tilted mass-transport principle. Multiplying by $(k-1)^{-\lambda n}$ and summing over $n$, we obtain that
\begin{equation*}
\chi_{p,\lambda} \leq \sum_{n = - \infty}^\infty \sum_{l=n\vee 0}^\infty (k-1)^{-(1-\lambda) l} \mathbb{E}_p X_{-l}^{-\infty,0} (k-1)^{-\lambda(n-l)}\mathbb{E}_p X_{n-l}^{-\infty,0} 
= \mathcal{H}_{p,\lambda} \mathcal{H}_{p,1-\lambda}
\end{equation*}
as claimed.
\end{proof}

\begin{proof}[Proof of Lemma~\ref{lem:Hammersley-Welsh}]
It suffices by Lemma~\ref{lem:half_space_to_full_space} to prove that
\begin{equation}
\label{eq:Hammersley-Welsh_claim}
\mathcal{H}_{p,1/2} \leq \mathbb{E}_p\left[|K_o\cap [o]|\right] \exp\left[ \sum_{n=0}^\infty (k-1)^{-n/2} D_p(n) \right].\end{equation}
Suppose that $o$ is connected to $x$ by some open path $\gamma$ inside $L_{-\infty,0}$. Let 
\[\sigma_0 = \max\{0\leq i \leq \operatorname{len}(\gamma) : h(\gamma(i)) = 0\}\]
be the last time the path $\gamma$ visits a vertex of height $0$, and recursively let
\[
\sigma_j = \begin{cases} \max\{\sigma_{j-1} \leq i \leq \operatorname{len}(\gamma) : h(\gamma(i)) = \min_{j \geq \sigma_{j-1}} h(\gamma(j))\} & j \text{ odd}\\
 \max\{\sigma_{j-1} \leq i \leq \operatorname{len}(\gamma) : h(\gamma(i)) = \max_{j\geq \sigma_{j-1}} h(\gamma(j))\} & j \text{ even}
\end{cases},
\]
stopping when $\sigma_j=\operatorname{len}(\gamma)$. Thus, for each $j$, the part of $\gamma$ between $\sigma_j$ and $\sigma_{j+1}$ crosses a slab from top to bottom if $j$ is even and from bottom to top if $j$ is odd, and in particular is contained inside this slab of heights between its first and last points. Moreover, the absolute value of the height difference between $\gamma(\sigma_j)$ and $\gamma(\sigma_{j+1})$ is strictly decreasing in $j$.

Using the BK inequality and a union bound, we obtain that
\[
\mathbb{E} X_{-n}^{-\infty,0} \leq \mathbb{E}_p |K_o \cap [o]| \mathbf{1}(n=0) + \mathbb{E}_p |K_o \cap [o]| \sum_k \sum_{\mathbf{s} \in S_{n,k}} \prod_{i=0}^{\lfloor (k-1)/2\rfloor} D_p(s_{2i+1}) \prod_{i=0}^{\lfloor k/2 \rfloor} U_p(s_{2i}),
\]
where $S_{l,n}$ is the set of strictly decreasing sequences of positive integers $\mathbf{s}=(s_1,\ldots,s_l)$ such that $\sum_{i=1}^l (-1)^{l+1} s_i=n$. (See \cite[Section 5.4]{Hutchcroftnonunimodularperc} for more detailed proofs of similar ``up and down'' inequalities.)
Using the fact that $D_p(n)=(k-1)^{n/2} \sqrt{D_p(n)U_p(n)}$ and $U_p(n)=(k-1)^{-n/2} \sqrt{D_p(n)U_p(n)}$, we have that
\[
\prod_{i=0}^{\lfloor (l-1)/2\rfloor} D_p(s_{2i+1}) \prod_{i=0}^{\lfloor l/2 \rfloor} U_p(s_{2i}) = (k-1)^{n/2} \prod_{i=1}^l \sqrt{D_p(s_i)U_p(s_i)}
\]
for every $\mathbf{s}\in S_{l,n}$, and hence that
\[
\frac{(k-1)^{-n/2}\mathbb{E} X_{-n}^{-\infty,0}}{\mathbb{E}_p |K_o \cap [o]|} \leq \mathbf{1}(n=0)+ \sum_l \sum_{\mathbf{s}\in S_{n,k}} \prod_{i=1}^l \sqrt{D_p(s_i)U_p(s_i)}.
\]
Letting $S_l$ denote the set of strictly decreasing sequences of positive integers $\mathbf{s}=(s_1,s_2,\ldots,s_l)$ and noting that every such sequence is in $S_{n,l}$ for exactly one $n\geq 1$, it follows that
\[
\sum_{n=0}^\infty \frac{(k-1)^{-n/2}\mathbb{E} X_{-n}^{-\infty,0}}{\mathbb{E}_p |K_o \cap [o]|} \leq  1+\sum_{l=0}^\infty \sum_{\mathbf{s}\in S_{l}} \prod_{i=1}^l \sqrt{D_p(s_i)U_p(s_i)}.
\]
Now, observe that for any non-negative function $f:\{1,2,\ldots\}\to [0,\infty]$, we have that
\[
1+\sum_{l=0}^\infty \sum_{\mathbf{s}\in S_{l}} \prod_{i=1}^l f(s_i) = \prod_{i=1}^\infty (1+f(i)) \leq \exp\left[\sum_{i=1}^\infty f(i) \right],
\]
where the inequality on the right hand side follows from the elementary inequality $1+x\leq e^x$.
Applying this inequality with $f(i)=\sqrt{D_p(i)U_p(i)}$, it follows that 
\[
\sum_{n=0}^\infty(k-1)^{-n/2}\mathbb{E} X_{-n}^{-\infty,0} \leq \mathbb{E}_p |K_o \cap [o]| \exp\left[\sum_{i=1}^\infty \sqrt{D_p(i)U_p(i)} \right],
\]
which is equivalent to the claimed inequality \eqref{eq:Hammersley-Welsh_claim} since $D_p(i)=(k-1)^{i/2}\sqrt{D_p(i)U_p(i)}$.
\end{proof}

We are now ready to conclude the proof of \Cref{thm:dimension_jump}.

\begin{proof}[Proof of \Cref{thm:dimension_jump}] 
\Cref{lem:beta_dim,lem:coming_back_up_with_lamps,lm:beta for direct product}  already establish every claim in the theorem other than the fact that $\beta_p^*$ is continuous on $(0,p_u]$. By \Cref{lem:beta*_strictly_decreasing}, it suffices to prove that $\beta_p^*$ is right-continuous on $(0,p_u)$. This claim follows easily from the equality $p_t=p_u$ together with the results of \cite{Hutchcroftnonunimodularperc} as we now explain.
In \cite[Section 5.3]{Hutchcroftnonunimodularperc}, it is shown via supermultiplicativity considerations that the quantity
\[
\alpha_p := -\lim_{n\to\infty}\frac{ \log \mathbb{P}_p (X_n^{0,n} > 0)}{n \log (k-1)} 
\]
is well-defined and left-continuous for all $0<p \leq 1$. (Note that the simple structure of the modular function and the level sets in our example lets us express this quantity using simpler notation than is required in the general nonunimodular transitive setting.)
Moreover, it is proven in \cite[Sections 5.5 and 5.6]{Hutchcroftnonunimodularperc} that if $p<p_t$ then the limit
\[
\beta_p :=  -\lim_{n\to\infty}\frac{ \log \mathbb{E}_p \left[X_n^{-\infty,n}\right]}{n \log (k-1)}
\]
is well defined and equal to $\alpha_p$. 
 Since $\beta_p$ is defined using a \emph{submultiplicative} sequence, it is \emph{right} continuous, and the equality of $\alpha_p$ and $\beta_p$ on $(0,p_t)$ ensures that they are continuous on the same interval. As such, it suffices to prove that $\beta_p^*=\beta_p$ for all $0<p<p_t=p_u$.
This in turn follows easily from the inequalities
\[
\mathbb{E}_p X_n^{0,n} \leq \mathbb{E}_p X_n^{-\infty,n} \leq \mathbb{E}_p X_n^{0,n}  \mathbb{E}_p X_0^{\infty,\infty},
\]
the first of which is trivial and the second of which is a simple consequence of the BK inequality (see \cite[Section 5.4]{Hutchcroftnonunimodularperc}), together with the fact that $\mathbb{E}_p X_0^{\infty,\infty}\leq \chi_{p,1/2}$ is finite for $p<p_t$.
\end{proof}

\subsection*{Acknowledgements}
This work was supported by NSF grant DMS-1928930 and a Packard Fellowship for Science and Engineering.

\footnotesize{

\bibliographystyle{abbrv}
\bibliography{big_bib_file}
}
\end{document}